\providecommand{\tabularnewline}{\\}
\theoremstyle{plain}
\newtheorem{thm}{\protect\theoremname}
  \theoremstyle{plain}
  \newtheorem{cor}[thm]{\protect\corollaryname}
  \theoremstyle{plain}
  \newtheorem{lem}[thm]{\protect\lemmaname}
  \theoremstyle{definition}
  \newtheorem{example}[thm]{\protect\examplename}
  \providecommand{\corollaryname}{Corollary}
  \providecommand{\examplename}{Example}
  \providecommand{\lemmaname}{Lemma}
\providecommand{\theoremname}{Theorem}
\begin{document}

\title{An extrapolative approach to integration over hypersurfaces in the
level set framework}

\author{Catherine Kublik\thanks{Department of Mathematics, University of Dayton, 300 College Park, Dayton, OH 45469, USA. Email: ckublik1@udayton.edu.}~ and Richard Tsai\thanks{Department of Mathematics, KTH Royal Institute of Technology, SE-100 44, Stockholm, Sweden and Department of Mathematics and ICES, University of Texas at Austin, Austin, TX 78712, USA. Email: tsai@kth.se.}}
\maketitle
\begin{abstract}
We provide a new approach for computing integrals over hypersurfaces
in the level set framework. 
The method is based on the discretization (via simple Riemann sums) \textcolor{black}
of the \textcolor{black}{classical} formulation used in the level
set framework, with the choice of specific kernels supported on a tubular neighborhood around the interface to approximate the Dirac delta function. The \textcolor{black}{novelty lies} in the choice of kernels, specifically its number of vanishing moments,  \textcolor{black}{which enables accurate computations of integrals over \textcolor{black}{a class of closed, continuous, piecewise smooth,  curves or surfaces; e.g. curves in two dimensions that contain finite number of corners.}} We prove that for smooth
interfaces, if the kernel has enough vanishing moments (related to
the dimension of the embedding space), the analytical integral formulation coincides
exactly with the integral one wishes to calculate. For curves with
corners and cusps, the formulation is not exact but we provide an analytical
result relating the severity of the corner or cusp with
the width of the tubular neighborhood. We show numerical examples
demonstrating the capability of the approach, especially for integrating
over piecewise smooth interfaces and for computing integrals where
the integrand is only Lipschitz continuous or has an integrable singularity.
\end{abstract}

\section{Introduction}

\textcolor{black}{We propose an extrapolative approach for computing integrals over a class of piecewise smooth hypersurfaces,
%that are piecewise smooth with a finite number of corners, when the hypersurface is 
given implicitly via a level set function. The method is based on the classical approximation used in the level set framework \textcolor{black}{that smears out
the Dirac $\delta$ function to a bump function with a compact support.} 
%However, the novelty of this work stems from the interpretation of this classical approximation \textcolor{black}{and the proposed generalization thereof. }%With a signed distance function, 
\textcolor{black}{We analyze a family of integrals over the level sets of a Lipschitz continuous function whose zero level set defines the hypersurface,  and use special kernels with vanishing moments for the approximation of the Dirac $\delta$-function. %exploit the regularity of this family of integrals. 
\emph{The novelty is in how we combine the classical formulas for the more challenging cases in which the hypersurfaces have kinks and corners.} In particular, the proposed method does not require any local explicit parameterization of the hypersurface, nor
%our kernels do not need to be symmetric about the hypersurface, nor 
the explicit locations of the corners and kinks on the hypersurface.
The key to our results is the smoothness of this family of integrals with respect to the parameter $\eta$ which is the signed distance between the zero level set $\Gamma_{0}$ and the "parallel" level set $\Gamma_{\eta}$. }
%We use this approach to compute integrals over piecewise smooth hypersurfaces and analyze the method for curves with corners/cusps in two dimensions to 
\textcolor{black}{We derive} an estimate of the error in terms of the severity of the corner/cusp and the width of the kernels. 
This work lays the foundation of a numerical scheme for computing general improper integrals.}

\textcolor{black}{Let $\Gamma_{0}$ be a closed hypersurface in $\mathbb R^n$ (namely, a closed curve in $\mathbb{R}^{2}$ or closed surface in $\mathbb{R}^{3}$) defined implicitly as the zero level
set of a level set function $\phi$, namely 
\begin{equation}
\Gamma_{0}:=\left\{ \mathbf{x}:\phi(\mathbf{x})=0\right\} .\label{eq:levelset}
\end{equation}
We are interested in computing integrals of the form
\begin{equation}
I_{0}:=\int_{\Gamma_{0}}f(\mathbf{x})dS(\mathbf{x}),\label{eq:surface_integral}
\end{equation}
}
\textcolor{black}{which is classically approximated in the level set framework \cite{osher_sethian88} by
\begin{equation}
S:=\int_{\mathbb{R}^{n}}\tilde{f}(\mathbf{x})\delta_{\epsilon}(\phi(\mathbf{x}))|\nabla\phi(\mathbf{x})|d\mathbf{x},
\label{eq:S}
\end{equation}
where $\tilde{f}:\mathbb R^n \rightarrow \mathbb R$ is Lipschitz, constant along the normal of $\Gamma_{0}$ and $\tilde{f} = f$ on $\Gamma_{0}$. }

\textcolor{black}{We consider $S$ as an average of a one parameter family of integrals in $\eta$, 
$$
S = \int_{\mathbb{R}^{n}}\tilde{f}(\mathbf{x})\delta_{\epsilon}(\phi(\mathbf{x}))|\nabla\phi(\mathbf{x})|d\mathbf{x} = \int_{-\epsilon}^{\epsilon}\delta_{\epsilon}(\eta) \left ( \int_{\Gamma_{\eta}}\tilde{f}(\mathbf{x})dS(\mathbf{x}) \right ) d\eta
$$
and \emph{exploit the smoothness of this family of integrals with respect to $\eta$ via suitable moment conditions on the kernel $\delta_{\epsilon}$.} Our first result shows that for smooth hypersurfaces in $\mathbb R^n$, $S$ yields the exact value of $I_{0}$ if the kernel $\delta_{\epsilon}$ has $n-1$ vanishing moments. In our second result $\Gamma_{0} \subset \mathbb R^2$ has a corner or a $p$th cusp singularity. The result states that if $\delta_{\epsilon}$ has $m$ vanishing moments and $\delta_{\epsilon}(\eta) = O(\epsilon^{-k})$, then for small $\epsilon>0$,
 $$
|S-I_{0}|=\begin{cases}
O\left (\epsilon^{2+m-k} \right ) & p=1\mbox{\mbox{ (corner)}}\\
O \left (\epsilon^{1+\frac{1}{p}-k} \right ) & p\geq2\mbox{ (cusp).}
\end{cases},
$$
\textcolor{black}{This paper relates the geometry of the hypersurface and the smoothness of its corners to the choice of kernels needed to obtain a highly accurate numerical scheme for computing integrals over that hypersurface.}
}

\textcolor{black}{\section {Motivation for the present work}
\subsection{Background and related work}
This paper provides a new understanding of surface integrals
in the level set framework, particularly in the case where the curve
or surface $\Gamma_{0}$ is only piecewise smooth. % with a finite number of corners. 
It is of interest to study the integration
over these types of hypersurfaces in the context of level set methods since there
are many applications of level set methods in which the hypersurfaces 
%may develop singularities. In the setting of level set techniques,
%it is common for interfaces to develop singularities as they
go through topological changes in  a dynamical process. In computer vision for example, the
segmentation of an image may be obtained via a two dimensional flow
using level set methods, in which case the flow will give rise to
a curve undergoing topological changes and developing corners during
its evolution. Level set methods have also been used in constrained
optimization problems \cite{maitre_santosa02,osher_santosa01}, where
the Lagrange multipliers can be expressed in terms of boundary integrals.
In addition, boundary integral methods used in combination with level sets \cite{kublik_tanushev_tsai13,kublik_tsai16,chen_kublik_tsai}
have recently shown promising results. Other applications of implicit
boundary integral methods include the computation of the Dirichlet-to-Neumannn
map in the context of shape optimization and the integration over
streamlines in fluid mechanics. In this paper, we shed light on a
mathematical framework for integrating over piecewise smooth hypersurfaces 
defined implicitly via a level set function.}

\textcolor{black}{We focus on the situation where the information about $\Gamma_{0}$
is given only via the values of a level set function $\phi$ on some grid. We shall refer
to the corresponding grid function by $\phi_{\mathbf{i}}.$ There
are two strategies for computing integrals like (\ref{eq:surface_integral})
in the level set framework:
\begin{enumerate}
\item \textcolor{black}{Derive local explicit approximations of the implicit surface, then derive quadrature rule based on the explicit, approximate surfaces. }
From the level set grid
function $\phi_{\mathbf{i}}$, one typically approximates the curve or surface $\Gamma_{0}$ by $\Gamma_{\triangle}$, (generally a polygon), and then uses this approximation to compute the integral of $f$ over $\Gamma_{0}$ using
local parameterizations of $\Gamma_{\triangle}$ (see e.g. \cite{lorensen_cline87,sacao05,smereka06}). Recently in \cite{saye15}), 
it is proposed to convert the implicit geometry into the graph of an implicitly given height function, leading to a recursive algorithm on the number of dimensions and thus requiring only one-dimensional root-finding and one-dimensional Gaussian quadratures. 
\textcolor{black}{The moment-fitting method from \cite{muller_kummer_oberlack13} requires special divergence-free bases and integration by parts. }
\item Derive an analytical integral formulation $I(f,\phi)$
that is \emph{easy} to discretize, then discretize it (see e.g. \cite{smereka06, towers07, zahedi_tornberg10, wen09, wen10, wen10_2, kublik_tanushev_tsai13, kublik_tsai16}). We note that
this approach computes the integral (\ref{eq:surface_integral}) \emph{without
using any local parameterizations of $\Gamma_{0}$. } 
\end{enumerate}
In this paper, we consider the second approach. In the level set framework,
the integral (\ref{eq:surface_integral}) is formally written in the form 
\begin{equation} 
\label{eq:classical level set integration}
\int_{\mathbb{R}^{n}}f(\mathbf{x})\delta(\phi(\mathbf{x}))|\nabla\phi|d\mathbf{x,}
\end{equation}
where $\delta(\cdot)$ denotes the Dirac delta function. \textcolor{black}{Formula \eqref{eq:classical level set integration} is then approximated
by \eqref{eq:S}, in which $\delta_\epsilon$ is a bump (kernel)  function that integrates to $1$.}  The integral \eqref{eq:S} over
$\mathbb{R}^{n}$ is then discretized via simple Riemann sums on a Cartesian grid using
a specific choice for $\delta(\cdot)$. There are many approximations
or regularizations of $\delta(\cdot)$ in the numerical literature.
Typically the regularized $\delta$-function is defined on a tubular
neighborhood around the interface of width $\epsilon$, denoted $\delta_{\epsilon}$. This effectively thickens or diffuses the interface in that tubular neighborhood. %This approximation of the surface integral in the context of level set methods is a popular choice that has led to many interesting techniques such as the diffuse domain method for computing solutions of boundary value problems \cite{burger_elvetun_schlottbom15}. 
%There are several choices for selecting the parameter $\epsilon$ (i.e. the width of the tubular neighborhood around the interface).
One choice is to take $\epsilon$ independent of the level set function
$\phi$ and the grid. In the work of Smereka \cite{smereka06}, the
discrete delta-function is concentrated within one grid cell on either
side of the interface, and is obtained by discretizing the fundamental solution of the 
Laplace equation using ghost-points. 
In the work of Towers \cite{towers07}, the discretized delta
function is computed via two different formulations involving the
Heaviside function. The more accurate formulation is obtained using
integration by parts on a suitable integral. In \cite{engquist_tornberg_tsai05},
the Dirac delta function is regularized using the gradient of the
level set function $\phi,$ a scaling that allows $\epsilon$ (the
width of the tubular neighborhood around the interface) to be small
with respect to the underlying grid}. %In the work of Wen, \cite{wen09, wen10, wen10_2}, fourth-order accurate discrete delta functions are used.}
\textcolor{black}{In the work of Burger etal. \cite{burger_elvetun_schlottbom15} 
%the authors solve boundary value problems by solving the PDE on a diffuse domain. Thus 
the integration over a sufficiently smooth closed surface is approximated by an integral like \eqref{eq:classical level set integration} over a narrow band around the surface. The authors provide a detailed analysis of the convergence of the integral over the diffuse surface towards the integral over the surface. In particular, they provide different convergence rates depending on the smoothness of the integrand. }
%However, unlike this current paper, the authors assume sufficiently smooth surfaces.

\textcolor{black}{The framework for the present work started with \cite{kublik_tanushev_tsai13}
and \cite{kublik_tsai16} where the authors proposed and studied an
integral formulation over the ambient space that coincides exactly
with the line or surface integral that one wishes to calculate. This
formulation is designed for curves and surfaces that are not defined
by any explicit parameterizations and is intended to be used with
level set techniques \cite{osher_fedkiw02,sethian99} or the closest point methods, e.g. \cite{ruuth_merriman08}.
In \cite{kublik_tsai16} the formulation is provided in dimensions
two and three and extended to open curves and surfaces. }

\textcolor{black}{The integral formulation proposed in \cite{kublik_tanushev_tsai13}
and \cite{kublik_tsai16} allows the computation of integrals of the
form (\ref{eq:surface_integral}), where $\Gamma_{0}$ is the zero
level set of the signed distance function $d$ to $\Gamma$, namely
$\Gamma_{0}:=~\left\{ \mathbf{x}:d(\mathbf{x)=0}\right\} $. 
The integral formulation is given by
\begin{equation}
I_0 \equiv \int_{\mathbb{R}^{n}}f(P_{\Gamma_{0}}(\mathbf{x}))\delta_{\epsilon}(d(\mathbf{x}))J(\mathbf{x};d(\mathbf{x}))d\mathbf{x},\label{eq:intformulation_original}
\end{equation}
%such that 
%\begin{equation}
%\int_{\mathbb{R}^{n}}f(P_{\Gamma_{0}}(\mathbf{x}))\delta_{\epsilon}(d(\mathbf{x}))J(\mathbf{x};d(\mathbf{x}))d\mathbf{x}=I_{0},
%\end{equation}
where $P_{\Gamma}:\mathbb{R}^{n}\mapsto\Gamma_{0}$ is the closest
point mapping to $\Gamma_{0}$ (or projection operator onto $\Gamma_{0}$)
defined as
\begin{equation}
P_{\Gamma_{0}}(\mathbf{x})=\mathbf{x}-d(\mathbf{x})\nabla d(\mathbf{x}),\label{eq:proj_change_var}
\end{equation}
$\delta_{\epsilon}$ is an averaging kernel 
%satisfying some vanishing moment conditions and 
specifying a tubular neighborhood around $\Gamma_{0}$,
and $J(\mathbf{x};d(\mathbf{x}))$ is the Jacobian that accounts for
the change in curvature between nearby level sets and the zero level
set $\Gamma_{0}$. The main advantage of formulation (\ref{eq:intformulation_original})
is that unlike \eqref{eq:classical level set integration} which is an \emph{approximation} of $I_{0}$ using a regularized Dirac-$\delta$
function concentrated on $\Gamma_{0}$ \cite{dolbow_harari09,engquist_tornberg_tsai05,smereka06,towers07,zahedi_tornberg10},
(\ref{eq:intformulation_original}) is equal to $I_{0}$ analytically.
Errors are therefore due only to the numerical scheme used to discretize
(\ref{eq:intformulation_original}) instead of both the numerical
scheme and the anterior approximation.}

\textcolor{black}{For smooth curves or surfaces, the integral formulation (\ref{eq:intformulation_original})
is very powerful: it provides a very elegant, simple and attractive
computational method for computing surface integrals. In addition,
the authors in \cite{kublik_tsai16} showed that the Jacobian can
be expressed as the product of the non zero singular values of the
Jacobian matrix of the closest point mapping $P_{\Gamma_{0}}$. The
benefit of such an expression is that for smooth integrands and smooth
curves or surfaces, the accuracy of the discretizations of (\ref{eq:intformulation_original})
will depend only on the order of the finite difference scheme used
to approximate the Jacobian matrix of $P_{\Gamma_{0}}$. 
}

\textcolor{black}{Finally, we remark
that if one has a level set function $\phi$ which is not the signed
distance function to $\Gamma_{0}$, fast algorithms such as fast marching
and fast sweeping \cite{cheng_tsai08,russo_smereka00,sethian96,tsai_cheng_osher_zhao03,tsitsiklis95}
can be used to construct a signed distance function $d$ to $\Gamma_{0}$.
}
\subsection{Computational difficulties near a corner}

Both Type I and Type II methods have difficulties resolving singularities
from only the values of $\phi_{\mathbf{i}}.$ In particular, Type
II methods using the regularization parameter $\epsilon$ lead to
$O(\epsilon)$ error for each corner. This error is partially due
to the discretization of the Jacobian term. The new approach discussed
in this paper does not use the Jacobian term and as such gives a viable
approach to handling integrable singularities.

The particular approach described in \cite{kublik_tanushev_tsai13,kublik_tsai16}
has specific difficulties in resolving the singularities. Indeed,
in a neighborhood of a singular point, the change of variables (\ref{eq:proj_change_var})
breaks down. This occurs whenever the signed distance function is
not $C^{1,\alpha},\alpha>1$, and when the reach of the distance function
is smaller than the tubular neighborhood $\left\{ \mathbf{x}:-\epsilon\leq d(\mathbf{x})\leq\epsilon\right\} $(the
reach refers to the region near $\Gamma_{0}$ where $d$ is differentiable).
In addition, since the expression for the Jacobian $J$ involves curvatures
of level sets, it will be necessary to use one-sided differencing
to discretize $J$ in order to avoid differentiating $d$ across kinks.
Finally, if we consider a corner in two dimensions (see Figure \ref{fig:corner-and-projection})
we see that in the convex region enclosed by $\Gamma_{0}$ near the
corner point, points on an $\eta$-level set will not project onto
the entire curve $\Gamma_{0}$, whereas points on the $(-\eta$)-level set will all project onto $\Gamma_{0}$, leading to a ``deficiency''
of points coming from one side. 

\paragraph*{A characterization of corners and edges via the closest point projection}

Let us now use the closest point mapping with distance $\eta$ more
explicitly as $P_{\eta}(\mathbf{x}):=\mathbf{x}-\eta\nabla d(\mathbf{x})$.
One way to circumvent this ``deficiency'' issue is to identify the
points on the $(-\eta$)-level set that project onto the part of $\Gamma_{0}$
that is missed by projecting the points from the $\eta$ level set.
Such points satisfy

\begin{equation}
P_{-\eta}(P_{2\eta}(\mathbf{x}))\neq P_{\eta}(\mathbf{x}),\eta=d(\mathbf{x}),\label{eq:overproject}
\end{equation}
which states that if we over project $\mathbf{x}$ by a distance $2\eta$
and then project the result back by a distance $\eta$ (in the opposite
direction), we are not back at the same point. Points away from the
corner on the other hand, i.e. in smooth regions, satisfy $P_{-\eta}(P_{2\eta}(\mathbf{x}))=P_{\eta}(\mathbf{x})$.
See Figure \ref{fig:corner-and-projection}. 

\begin{figure}[h]
\begin{centering}
\includegraphics[scale = 1]{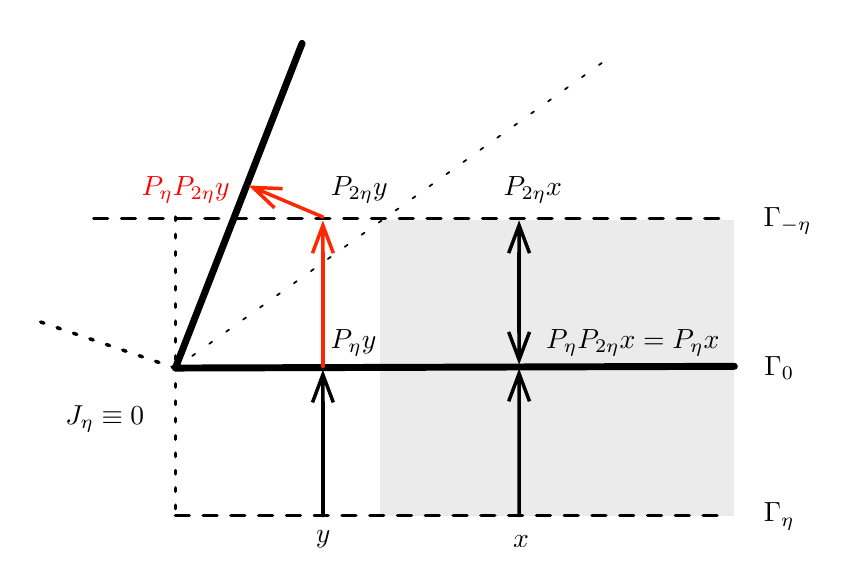}
\par\end{centering}
\caption{Projection near a corner. \label{fig:corner-and-projection}}
\end{figure}

Thus, once we have identified
the points that satisfy (\ref{eq:overproject}) , we count these points
twice to compensate for the fact that these points have no corresponding
ones on the $\eta$ level set. This effectively translates into the
following correction integral:
\[
\int_{\mathbb{R}^{2}}f(P_{\Gamma}(\mathbf{x}))\omega(\mathbf{x})\delta_{\epsilon}(d(\mathbf{x}))d\mathbf{x},
\]
with the weight $\omega$ defined as
\[
\omega(\mathbf{x})=\begin{cases}
0 & |d(\mathbf{x})\Delta d(\mathbf{x})|=1\\
2 & P_{-\eta}(P_{2\eta}(\mathbf{x}))=P_{\eta}(\mathbf{x}),\eta=d(\mathbf{x})\\
1 & \mbox{otherwise.}
\end{cases}
\]
There are several issues with this approach: the first one is that
the modification of the integrand with the weight $\omega$ leads
to a discontinuous integrand. Thus the numerical approximation of
the integral will not be able to reach a high order of accuracy. Second,
there are numerical difficulties in the implementation of criteria
(\ref{eq:overproject}): numerically, this requires the use of a threshold,
which in turns raises the question of how to choose such a threshold
value. Thus, this approach is not suitable for high accuracy and does
not provide a seamless implementation.

The purpose of the present work is to give an alternative but related
integral formulation that allows the computations of surface integrals
where the curve or surface has singular points, and which does not
suffer from the issues discussed above. A large advantage of the new
approach compared to (\ref{eq:intformulation_original}) is that the
new formulation does not involve the Jacobian $J$ and is therefore
more convenient to use. Additionally, it provides a mathematical understanding
of the relationship between accuracy and how severe a singularity
is. 

\section{The extrapolative approach}

We now present the new approach which explores the smoothly varying
relations among the different level sets of $\phi$ near $\Gamma_{0}.$ 
In particular, for surfaces having corners, the integration of the nearby  parallel surfaces varies smoothly 
as a function of the distance to the surface $\Gamma_0$, except at distance $0$ 
(corresponding to the integral on $\Gamma_0$).
Hence, it is possible to use kernels having suitable properties to approximate integration on $\Gamma_0$ by
\emph{extrapolating} integrations defined on other nearby surfaces.

\subsection{Smooth curves and surfaces}

Let $\phi:\mathbb{R}^{n}\mapsto\mathbb{R}$, $n \in \mathbb N$, be a Lipschitz
function and $\Gamma_{\eta}:=\left\{ \mathbf{x:\phi(\mathbf{x})=\eta}\right\} $
be the $\eta-$level set of $\phi$. We consider $\tilde{f}:\mathbb{R}^{n}\mapsto\mathbb{R}$
to be a Lipschitz function and define $S$ as

\begin{equation}
S:=\int_{\mathbb{R}^{n}}\tilde{f}(\mathbf{x})\delta_{\epsilon}(\phi(\mathbf{x}))|\nabla\phi(\mathbf{x})|d\mathbf{x}.\label{eq:integral_noJacobian}
\end{equation}

Integrals of the form (\ref{eq:integral_noJacobian}) have been used
to approximate $I_{0}$ but unlike (\ref{eq:intformulation_original})
which coincides exactly with $I_{0}$, we have in general $S\approx I_{0}.$
However, under specific conditions which we explain below, it is possible
to have $S=I_{0}$ or to know precisely how the error between $S$
and $I_{0}$ behaves in terms of $\epsilon$ (width of the tubular
neighborhood around $\Gamma_{0})$ for example and in terms of a corner
or how sharp a cusp is. 

We define the one-parameter family of functionals

\begin{equation}
I[\tilde{f},\phi](\eta):=\int_{\Gamma_{\eta}}\tilde{f}(\mathbf{x})dS(\mathbf{x}),\label{eq:familyfunc}
\end{equation}
which represents the integral of $\tilde{f}$ over the $\eta-$level
set of $\phi.$ It is worth pointing out that in \cite{kublik_tanushev_tsai13,kublik_tsai16},
the authors considered a similar approach to construct (\ref{eq:intformulation_original}),
but their family of functionals was $F_{\eta}:=\int_{\Gamma_{\eta}}\tilde{f}(\mathbf{x})J_{\eta}(\mathbf{x})dS(\mathbf{x})$,
where the Jacobian $J_{\eta}$ is the same as the Jacobian $J$ in
(\ref{eq:intformulation_original}). The purpose of this Jacobian
was to ensure the equality $F_{\eta}=I_{0}$ for all $-\epsilon\leq\eta\leq\epsilon$.
In other words, $I_{0}$ was parameterized in terms of the nearby level
sets within the tubular neighborhood. Unlike this original approach,
(\ref{eq:familyfunc}) is not equal to $I_{0}$ for any $\eta$ since
there is no Jacobian term to compensate for the change in curvature.
Now by the coarea formula, we can average over the parameterizations
(\ref{eq:familyfunc}) using an averaging kernel $\delta_{\epsilon}$
to obtain
\begin{align}
\int_{-\epsilon}^{\epsilon}\delta_{\epsilon}(\eta)I[\tilde{f},\phi](\eta)d\eta & =\int_{\mathbb{R}^{n}}\tilde{f}(\mathbf{x})\delta_{\epsilon}(\phi(\mathbf{x}))|\nabla\phi(\mathbf{x})|d\mathbf{x}=S.\label{eq:coarea}
\end{align}

We then have the following result:
\begin{thm}
\label{thm:Theorem1} Suppose that

\begin{enumerate}
\item $d$ is the signed distance function to $\Gamma_{0}$, i.e. $|\nabla d|=1$.
\item $\nabla\tilde{f}\cdot\nabla d=0$ in the viscosity solution sense,
meaning that $\tilde{f}$ is constant along the normals of $\Gamma_{0}$ wherever
normals are well defined (namely $\tilde{f}$ is the constant extension
of $f:\Gamma_{0}\mapsto\mathbb{R}$ along the normals of $\Gamma_{0}$).
\item $\Gamma_{\eta}$ are closed $C^{2}$ hypersurfaces for $-\epsilon\leq\eta\leq\epsilon.$
\end{enumerate}
Then for sufficiently small $\epsilon>0$ such that $\Gamma_{\pm\epsilon}\neq\emptyset$,
we have
\[
I[\tilde{f},d](\eta)=I_{0}+\sum_{i=1}^{n-1}A_{i}\eta^{i},
\]
where $A_{i}$, $1\leq i\leq n$ are constants that depend on $\tilde{f}$
and $d$.

\end{thm}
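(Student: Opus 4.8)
The plan is to reduce the claim to (a rederivation of) Weyl's tube formula. Since $d$ is the signed distance to $\Gamma_{0}$ and, by hypothesis (3), every $\Gamma_{\eta}$ with $|\eta|\le\epsilon$ is a closed $C^{2}$ hypersurface, the tubular neighborhood $\{|d|\le\epsilon\}$ lies within the reach of $\Gamma_{0}$; consequently the normal map
\[
\Phi_{\eta}:\Gamma_{0}\longrightarrow\Gamma_{\eta},\qquad \Phi_{\eta}(\mathbf{x})=\mathbf{x}+\eta\,\nabla d(\mathbf{x}),
\]
is a $C^{1}$ diffeomorphism for each fixed $\eta\in[-\epsilon,\epsilon]$, with inverse the closest point projection $P_{\Gamma_{0}}$ of \eqref{eq:proj_change_var} restricted to $\Gamma_{\eta}$. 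First I would record that, because $\nabla\tilde{f}\cdot\nabla d=0$ and $\tilde{f}$ is the constant normal extension of $f$, we have $\tilde{f}\bigl(\Phi_{\eta}(\mathbf{x})\bigr)=\tilde{f}(\mathbf{x})=f(\mathbf{x})$ for every $\mathbf{x}\in\Gamma_{0}$.

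Next I would perform the change of variables $\mathbf{y}=\Phi_{\eta}(\mathbf{x})$ in \eqref{eq:familyfunc}, obtaining
\[
I[\tilde{f},d](\eta)=\int_{\Gamma_{\eta}}\tilde{f}(\mathbf{y})\,dS(\mathbf{y})=\int_{\Gamma_{0}}f(\mathbf{x})\,J_{\eta}(\mathbf{x})\,dS(\mathbf{x}),
\]
where $J_{\eta}(\mathbf{x})$ is the surface Jacobian of $\Phi_{\eta}$, i.e. the factor by which $\Phi_{\eta}$ scales $(n-1)$-dimensional area. The core computation is to evaluate $J_{\eta}$. Since $d\Phi_{\eta}=\mathrm{Id}+\eta\,\mathrm{Hess}\,d$ and $\mathrm{Hess}\,d$ annihilates $\nabla d$ (as $|\nabla d|\equiv1$) while restricting on $T_{\mathbf{x}}\Gamma_{0}$ to the Weingarten map of $\Gamma_{0}$, one may work in an orthonormal frame $e_{1}(\mathbf{x}),\dots,e_{n-1}(\mathbf{x})$ of principal directions with principal curvatures $\kappa_{1}(\mathbf{x}),\dots,\kappa_{n-1}(\mathbf{x})$; there $d\Phi_{\eta}(e_{j})=(1+\eta\kappa_{j})\,e_{j}$, and $d\Phi_{\eta}$ maps $T_{\mathbf{x}}\Gamma_{0}$ onto $T_{\Phi_{\eta}(\mathbf{x})}\Gamma_{\eta}$ because the parallel surfaces share the normal direction $\nabla d$ along normal lines. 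Hence
\[
J_{\eta}(\mathbf{x})=\prod_{j=1}^{n-1}\bigl(1+\eta\,\kappa_{j}(\mathbf{x})\bigr),
\]
which is precisely the Jacobian $J$ appearing in \eqref{eq:intformulation_original}, and the diffeomorphism property guarantees $J_{\eta}>0$ on $[-\epsilon,\epsilon]$ for $\epsilon$ small.

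Finally I would expand this product in powers of $\eta$. Writing $\sigma_{i}(\mathbf{x})$ for the $i$-th elementary symmetric polynomial in $\kappa_{1}(\mathbf{x}),\dots,\kappa_{n-1}(\mathbf{x})$ (with $\sigma_{0}\equiv1$), one has $J_{\eta}(\mathbf{x})=\sum_{i=0}^{n-1}\sigma_{i}(\mathbf{x})\,\eta^{i}$, so that
\[
I[\tilde{f},d](\eta)=\sum_{i=0}^{n-1}\eta^{i}\int_{\Gamma_{0}}f(\mathbf{x})\,\sigma_{i}(\mathbf{x})\,dS(\mathbf{x}).
\]
The $i=0$ term is $\int_{\Gamma_{0}}f\,dS=I_{0}$, and setting $A_{i}:=\int_{\Gamma_{0}}f(\mathbf{x})\,\sigma_{i}(\mathbf{x})\,dS(\mathbf{x})$ for $1\le i\le n-1$ — finite because $\Gamma_{0}$ is compact and $C^{2}$, so the $\sigma_{i}$ are continuous and bounded and $f$ is Lipschitz — yields the claimed identity, with each $A_{i}$ depending only on $f$ and on the curvatures of the zero level set of $d$. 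I expect the main obstacle to be the rigorous justification of the change of variables together with the Jacobian computation: one must confirm that $\epsilon$ can be taken below the reach of $\Gamma_{0}$ (this is where hypothesis (3) enters) so that $\Phi_{\eta}$ is genuinely a diffeomorphism, and then carry out the Weingarten-map computation carefully, tracking the sign convention in $\nabla d$, to arrive at the clean product formula; everything afterward is a routine polynomial expansion.
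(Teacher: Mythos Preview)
Your proposal is correct and follows essentially the same approach as the paper: both pull back the integral over $\Gamma_{\eta}$ to $\Gamma_{0}$ via the normal map (the paper phrases it as the closest point projection $P_{\Gamma_{\eta}}$, which restricted to $\Gamma_{0}$ is exactly your $\Phi_{\eta}$), invoke the Jacobian $1+\sum_{i=1}^{n-1}\sigma_{i}\eta^{i}$, and use the constant-in-normal property of $\tilde f$ to identify the zeroth term with $I_{0}$. The only difference is that you rederive the product formula $J_{\eta}=\prod_{j}(1+\eta\kappa_{j})$ from the Weingarten map, whereas the paper imports it from \cite{kublik_tanushev_tsai13}; your coefficients $A_{i}=\int_{\Gamma_{0}}f\,\sigma_{i}\,dS$ are also stated slightly more cleanly as manifestly $\eta$-independent.
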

\begin{proof}
Let's denote the closest point mapping to $\Gamma_{\eta}$ (aka the
projection operator onto $\Gamma_{\eta}$ ) as $P_{\Gamma_{\eta}}:\mathbb{R}^{n}\mapsto\Gamma_{\eta}$.
If $\Gamma_{\eta}\in C^{2}(\mathbb{R}^{n})$ for all $-\epsilon\leq\eta\leq\epsilon$,
$\epsilon>0$, then
\begin{align*}
I[\tilde{f},d](\eta) & =\int_{\Gamma_{\eta}}\tilde{f}(\mathbf{x})dS(\mathbf{x})\\
 & =\int_{\Gamma_{0}}\tilde{f}(P_{\Gamma_{\eta}}(\mathbf{x}))\mathcal{J_{\eta}}(\mathbf{x})dS(\mathbf{x}),
\end{align*}
where $\mathcal{J_{\eta}}$ is the Jacobian that accounts for the
change in curvature between $\Gamma_{\eta}$ and $\Gamma_{0}$. Using
the results from \cite{kublik_tanushev_tsai13}, $\mathcal{J_{\eta}}$
can be written as $\mathcal{J_{\eta}}=1+\sum_{i=1}^{n-1}\sigma_{i}(h)\eta^{i}$,
where $\sigma_{i}(h)$ is the symmetric polynomial in the eigenvalues
of the Weingarten map induced by the second fundamental form $h$
associated to $\Gamma_{\eta}$. Thus we have
\begin{align*}
I[\tilde{f},d](\eta) & =\int_{\Gamma_{0}}\tilde{f}(P_{\Gamma_{\eta}}(\mathbf{x}))\left(1+\sum_{i=1}^{n-1}\sigma_{i}(h)\eta^{i}\right)dS(\mathbf{x})\\
 & =\int_{\Gamma_{0}}\tilde{f}(P_{\Gamma_{\eta}}(\mathbf{x}))dS(\mathbf{x})+\sum_{i=1}^{n-1}\eta^{i}\int_{\Gamma_{0}}\tilde{f}(P_{\Gamma_{\eta}}(\mathbf{x}))\sigma_{i}(h)dS(\mathbf{x})\\
 & =\int_{\Gamma_{0}}f(x)dS(\mathbf{x})+\sum_{i=1}^{n-1}\left(\int_{\Gamma_{0}}\tilde{f}(P_{\Gamma_{\eta}}(\mathbf{x}))\sigma_{i}(h)dS(\mathbf{x})\right)\eta^{i}\\
 & =I_{0}+\sum_{i=1}^{n-1}A_{i}\eta^{i},
\end{align*}
where $A_{i}=\int_{\Gamma_{0}}\tilde{f}(P_{\Gamma_{\eta}}(\mathbf{x}))\sigma_{i}(h)dS(\mathbf{x}))$,
$1\leq i\leq n$, and $I_{0}=\int_{\Gamma_{0}}\tilde{f}(P_{\Gamma_{\eta}}(\mathbf{x}))dS(\mathbf{x})$
since $\tilde{f}$ is constant along the normals to $\Gamma_{0}$,
and $P_{\Gamma_{\eta}}$ is the orthogonal projection onto $\Gamma_{\eta}$, leading to $\tilde{f}(P_{\Gamma_{\eta}}(\mathbf{x}))=f(\mathbf{x})$,
for all $\mathbf{x}\in\Gamma_{0}.$ 
\end{proof}
\textcolor{black}{
This leads to the following generalization. 
\begin{cor} 
\label{cor:Corollary0}
Assume now that the level set function $\phi$ is given by $\phi(\mathbf{x}) = \psi(d(\mathbf{x}))$, where $d$ is the signed distance function to $\Gamma_{0}$ and  $\psi : \mathbb R \rightarrow \mathbb R$ is a strictly monotonic function in $[-\epsilon,\epsilon]$, $\epsilon>0$ satisfying $\psi(0) = 0$.
Then 
$$
I[\tilde{f},\phi](\eta) = I_{0} + \sum_{i=1}^{n-1} B_{i} \left (\psi^{-1}(\eta) \right )^i,
$$
where $B_i, 1 \leq i \leq n$ are constants that depend on $\tilde{f}$ and $\phi$. 
\end{cor}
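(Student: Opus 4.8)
The plan is to reduce the Corollary to Theorem~\ref{thm:Theorem1} by exploiting the fact that the one-parameter family $\eta \mapsto I[\tilde{f},\phi](\eta) = \int_{\Gamma_{\eta}}\tilde{f}\,dS$ depends on the level set function only through its level sets, and that the level sets of $\phi = \psi\circ d$ are precisely the level sets of the signed distance function $d$ after a monotone relabelling by $\psi^{-1}$. Concretely, first I would record that since $\psi$ is strictly monotonic on $[-\epsilon,\epsilon]$ with $\psi(0)=0$, it is invertible there with a continuous inverse satisfying $\psi^{-1}(0)=0$, and for $\eta$ in a neighborhood of $0$,
\[
\Gamma_{\eta} = \{\mathbf{x}:\psi(d(\mathbf{x}))=\eta\} = \{\mathbf{x}:d(\mathbf{x})=\psi^{-1}(\eta)\},
\]
so that the $\eta$-level set of $\phi$ coincides, as a set, with the $\psi^{-1}(\eta)$-level set of $d$. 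Because $I[\tilde{f},\phi](\eta)$ is an integral over the surface $\Gamma_{\eta}$ alone — it involves neither $|\nabla\phi|$ nor any other feature of $\phi$ — this immediately yields the reparameterization identity
\[
I[\tilde{f},\phi](\eta) = I[\tilde{f},d]\bigl(\psi^{-1}(\eta)\bigr).
\]

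Second, I would verify that the hypotheses of Theorem~\ref{thm:Theorem1} are in force at the parameter value $s=\psi^{-1}(\eta)$. Hypotheses (1) and (2) concern only $d$ and $\tilde{f}$ and are unchanged; in particular $\tilde{f}$ is still the normal extension of $f$ from $\Gamma_{0}$. For hypothesis (3), since $\psi^{-1}$ is continuous and $\psi^{-1}(0)=0$, there is $\epsilon'\in(0,\epsilon]$ with $\psi^{-1}([-\epsilon',\epsilon'])\subseteq[-\epsilon_{0},\epsilon_{0}]$, where $[-\epsilon_{0},\epsilon_{0}]$ is a range on which the $d$-level sets are closed $C^{2}$ hypersurfaces (i.e. within the reach of $\Gamma_{0}$); shrinking $\epsilon$ to $\epsilon'$ if necessary, $\{d=\psi^{-1}(\eta)\}$ is then a closed $C^{2}$ hypersurface for every $|\eta|\le\epsilon'$. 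Applying Theorem~\ref{thm:Theorem1} with $\eta$ replaced by $\psi^{-1}(\eta)$ gives
\[
I[\tilde{f},d]\bigl(\psi^{-1}(\eta)\bigr) = I_{0} + \sum_{i=1}^{n-1} A_{i}\,\bigl(\psi^{-1}(\eta)\bigr)^{i},
\]
and combining with the identity above yields the claimed expansion with $B_{i}=A_{i}$, constants depending only on $\tilde{f}$ and $d$, hence on $\tilde{f}$ and $\phi$.

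I do not expect any genuinely new analytic difficulty: the entire content of the Corollary is the observation that the family $I[\tilde{f},\phi](\cdot)$ is natural under a monotone reparameterization of the level sets, so the polynomial structure in $\eta$ from Theorem~\ref{thm:Theorem1} is simply transported to a polynomial in $\psi^{-1}(\eta)$. The only point requiring care is the domain-of-validity bookkeeping — ensuring that $\psi^{-1}$ is well defined and carries a symmetric neighborhood of $0$ into the reach of $\Gamma_{0}$, so the relabelled level sets remain smooth closed hypersurfaces — and noting that strict monotonicity together with $\psi(0)=0$ is exactly what supplies both the invertibility of $\psi$ near $0$ and the normalization $\psi^{-1}(0)=0$ used above.
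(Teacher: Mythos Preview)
Your proposal is correct and follows essentially the same route as the paper: both identify the $\eta$-level set of $\phi$ with the $\psi^{-1}(\eta)$-level set of $d$, obtain $I[\tilde{f},\phi](\eta)=I[\tilde{f},d](\psi^{-1}(\eta))$, and then invoke Theorem~\ref{thm:Theorem1}. Your version is slightly more careful about the domain-of-validity bookkeeping, but the argument is the same.
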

\begin{proof}
Let $-\epsilon \leq \eta \leq \epsilon$ and let $\Gamma_{\eta} =\left \{ \mathbf{x} : \phi(\mathbf{x}) = \eta \right \}$. Since $\psi$ is strictly monotone in $[-\epsilon,
\epsilon]$, $\psi$ is invertible such that 
$\Gamma_{\eta} = \left \{ \mathbf{x} : \psi(d(\mathbf{x})) = \eta \right\} = \left \{ \mathbf{x} : d(\mathbf{x}) = \psi^{-1}(\eta) \right\} = \left \{ \mathbf{x} : d(\mathbf{x}) = \xi \right\}$,
where $\xi = \psi^{-1}(\eta)$ and $d$ is the signed distance function to $\Gamma_{0}$. Thus
\begin{align*}
I[\tilde{f},\phi](\eta) & =\int_{\Gamma_{\eta}}\tilde{f}(\mathbf{x})dS(\mathbf{x})\\
& = \int_{\Gamma_{\xi}} \tilde{f}(\mathbf{x})dS(\mathbf{x}) \\
& = I[\tilde{f},d](\xi) \\
& = I_{0} + \sum_{i=1}^{n-1} B_{i} \xi^i,
\end{align*}
by using Theorem~\ref{thm:Theorem1}. Therefore
$$
I[\tilde{f},\phi](\eta) = I_{0} + \sum_{i=1}^{n-1} B_{i} \left (\psi^{-1}(\eta) \right )^i,
$$
where $B_{i} = \int_{\Gamma_{0}} \tilde{f}(P_{\Gamma_{\xi}}) \sigma_{i}(h) dS(\mathbf{x})$, $1 \leq i \leq n$ with $\xi = \psi^{-1}(\eta)$. 
\end{proof}
An example of such a level set function is $\phi(\mathbf{x}) = d(\mathbf{x})^3$ or $\phi(\mathbf{x}) = \text{sgn}(d) d^2(\mathbf{x})$ where $d$ is the signed distance function to $\Gamma_{0}$. We note that in these two examples, the expansion in $\eta$ in $I[\tilde{f},\phi](\eta)$ will include fractional powers of $\eta$; Indeed if $\phi(\mathbf{x}) = d(\mathbf{x})^3$, then $\xi = \eta^{\frac{1}{3}}$ and thus
$$
I[\tilde{f},\phi](\eta) = I_{0} + \sum_{i=1}^{n-1} B_{i} \eta^\frac{i}{3}.
$$
}

In the dimensions of interest ($n=2,3$), Theorem~\ref{thm:Theorem1} states
that if $\Gamma_{0}$ is $C^{2}$, $\tilde{f}$ is constant along
the normals of $\Gamma_{0}$ and $d$ is the signed distance function
to $\Gamma_{0}$, then $I[\tilde{f},d]$ is linear in $\eta$ for
curves in two dimensions and quadratic in $\eta$ for surfaces in
three dimensions. Therefore, if we average the parameterizations $I[\tilde{f},d]$
with a kernel $\delta_{\epsilon}$ that has enough vanishing moments,
the terms in $\eta$ will vanish and we will be left with $I_{0}$,
thus making $S=I_{0}.$ The result is stated in the following Theorem:
\begin{thm}
\label{thm:Theorem2} Assume that Theorem \ref{thm:Theorem1} holds
and assume that the averaging kernel $\delta_{\epsilon}$ is compactly
supported in $[-\epsilon,\epsilon]$ with $n-1$ vanishing moments
(where $n$ is the dimension), namely
\[
\int_{-\infty}^{\infty}\delta_{\epsilon}(\eta) \eta^p d\eta=\begin{cases}
1 & p=0,\\
0 & 0<p\leq n-1,
\end{cases}
\]
then 
\[
I_{0}=\int_{\Gamma_{0}}f(x)dS(x)=\int_{\mathbb{R}^{n}}\tilde{f}(\mathbf{x})\delta_{\epsilon}(d(\mathbf{x}))d\mathbf{x}=S.
\]
\end{thm}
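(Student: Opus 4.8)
The plan is to combine the coarea identity \eqref{eq:coarea} with the polynomial expansion furnished by Theorem~\ref{thm:Theorem1}, and then annihilate every nonconstant term using the vanishing-moment hypothesis on $\delta_{\epsilon}$. The entire substance of the statement is the matching between the polynomial degree $n-1$ in Theorem~\ref{thm:Theorem1} and the requested number $n-1$ of vanishing moments, so the argument is essentially a one-line computation once the setup is in place.

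\medskip

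First I would record that, because $d$ is the signed distance function, $|\nabla d|=1$ (a.e.), so the integral $S$ appearing in the statement coincides with the one in \eqref{eq:integral_noJacobian} for $\phi=d$; hence \eqref{eq:coarea} applies and gives
\[
S=\int_{\mathbb{R}^{n}}\tilde{f}(\mathbf{x})\delta_{\epsilon}(d(\mathbf{x}))\,d\mathbf{x}=\int_{-\epsilon}^{\epsilon}\delta_{\epsilon}(\eta)\,I[\tilde{f},d](\eta)\,d\eta,
\]
where the $\eta$-integration is restricted to $[-\epsilon,\epsilon]$ since $\delta_{\epsilon}$ is supported there. For this to be legitimate I need $\epsilon$ small enough that all level sets $\Gamma_{\eta}$ with $|\eta|\le\epsilon$ are nonempty and $C^{2}$ — precisely the smallness condition inherited from Theorem~\ref{thm:Theorem1} — and I need $\tilde{f}(\mathbf{x})\delta_{\epsilon}(d(\mathbf{x}))$ to be integrable, which follows from $\tilde{f}$ being Lipschitz (hence bounded on the bounded tube $\{|d|\le\epsilon\}$, bounded because $\Gamma_{0}$ is compact) together with $\delta_{\epsilon}$ being a bounded, compactly supported kernel.

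\medskip

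Next I would invoke Theorem~\ref{thm:Theorem1}, whose hypotheses are assumed here, to write $I[\tilde{f},d](\eta)=I_{0}+\sum_{i=1}^{n-1}A_{i}\eta^{i}$ with the $A_{i}$ independent of $\eta$. Substituting into the displayed identity and using linearity of the integral,
\[
S=I_{0}\int_{-\epsilon}^{\epsilon}\delta_{\epsilon}(\eta)\,d\eta+\sum_{i=1}^{n-1}A_{i}\int_{-\epsilon}^{\epsilon}\delta_{\epsilon}(\eta)\,\eta^{i}\,d\eta.
\]
Finally, the moment hypothesis states that $\int_{-\epsilon}^{\epsilon}\delta_{\epsilon}(\eta)\,d\eta=1$ and $\int_{-\epsilon}^{\epsilon}\delta_{\epsilon}(\eta)\,\eta^{i}\,d\eta=0$ for every $1\le i\le n-1$; since the expansion of $I[\tilde{f},d]$ is a polynomial of degree at most $n-1$, every term in the sum is killed and $S=I_{0}=\int_{\Gamma_{0}}f\,dS$, as claimed.

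\medskip

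I do not expect a genuine obstacle. The only points requiring care are bookkeeping: (i) ensuring the $\epsilon$ fixing the support of $\delta_{\epsilon}$ is taken small enough for Theorem~\ref{thm:Theorem1} and the coarea formula to hold simultaneously on the whole tube (so $\Gamma_{\pm\epsilon}\neq\emptyset$ and $\Gamma_{\eta}$ stays $C^{2}$), and (ii) justifying the coarea exchange of integration, which is immediate from the boundedness and compact support just noted. Everything hard has already been done in Theorem~\ref{thm:Theorem1}; Theorem~\ref{thm:Theorem2} is its immediate corollary via the vanishing moments.
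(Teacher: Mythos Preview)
Your proposal is correct and follows essentially the same route as the paper: apply the coarea identity \eqref{eq:coarea} with $\phi=d$, substitute the polynomial expansion from Theorem~\ref{thm:Theorem1}, and use the $n-1$ vanishing moments of $\delta_{\epsilon}$ to kill every nonconstant term. Your added remarks on integrability and the smallness of $\epsilon$ are reasonable bookkeeping not spelled out in the paper's version, but the core argument is identical.
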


\begin{proof}
Using (\ref{eq:coarea}), the result of Theorem (\ref{eq:intformulation_original})
and the assumptions on $\delta_{\epsilon},$ we have
$$
S=\int_{-\epsilon}^{\epsilon}\delta_{\epsilon}(\eta)I[\tilde{f},d](\eta)d\eta=\int_{-\epsilon}^{\epsilon}\delta_{\epsilon}(\eta) \left (I_{0}+\sum_{i=1}^{n-1}A_{i}\eta^{i} \right )d\eta 
$$
$$
=I_{0}\int_{-\infty}^{\infty}\delta_{\epsilon}(\eta)d\eta +\sum_{i=1}^{n-1}A_{i}\int_{-\infty}^{\infty}\delta_{\epsilon}(\eta)\eta^{i}d\eta=I_{0}.
$$

\end{proof}
The main upshot of this result is that if the curve or surface is
smooth (i.e. $C^{2}$), it is possible to construct $S$ such that
it coincides exactly with $I_{0}$. To be more specific, if the kernel
$\delta_{\epsilon}$ has enough vanishing moments, then $S=I_{0}.$
For the dimensions of interest ($2$ and $3)$, it is easy to construct
kernels with $1$ or $2$ vanishing moments. For large dimensions,
we point out that it might not be easy to construct a kernel with
enough vanishing moments to obtain the equality between $S$ and $I_{0}$,
but in that case, the error between $S$ and $I_{0}$ will be related
to the number of vanishing moments of $\delta_{\epsilon}$. Thus,
the higher the number of vanishing moments, the more accurate the
approximation $S$ will be to $I_{0}$. 

Note that in general, $I[\tilde{f},d]$ may not be a polynomial
in $\eta$, but as long as $I[\tilde{f},d]$ has a Taylor expansion
about $\eta=0$, the accuracy of using $S$ to approximate $I_{0}$
will be determined by the number of vanishing moments of the kernel
$\delta_{\epsilon}$.  \textcolor{black}{Also in the case of a general level set function $\phi$, $I[\tilde{f},\phi]$ will have an expansion with fractional powers of $\eta$, thus requiring a special class of kernels with "fractional" vanishing moments in order to expect $S$ to coincides exactly with $I_{0}$. This, highlights that for smooth interfaces, it is advantageous to use a signed distance function.}

Theorem~\ref{thm:Theorem2} has several implications. First, it is very convenient
to use because unlike (\ref{eq:intformulation_original}), this formulation
does not need the Jacobian term. It is therefore simpler to implement
and performs the same as (\ref{eq:intformulation_original}) \emph{as
long as the kernel is chosen appropriately}. In a way, this can be
understood as a trade-off between number of vanishing moments and
Jacobian. The Jacobian allows (\ref{eq:intformulation_original})
to be exact, but $S$ can be made exact by using a kernel with enough
vanishing moments. Second, this simpler formulation gives a viable
approach for approximating integrals over curves and surfaces with
singularities. 

\subsection{Curves with corners}

In this section, we assume that $\Gamma_{0}$ is a \textcolor{black}{piecewise $C^2$} closed curve in
$\mathbb{R}^{2}$ with a corner at $(x_{0},y_{0})$ The purpose of
this section is to study the behavior of $I[\tilde{f},\phi](\eta)$
as a function of $\eta$ around $\eta=0$ in order to deduce how the
error incurred between $S$ and $I_{0}$ depends on the type of singularity
(corner or cusp). We describe the corner/cusp as follows:

\begin{itemize}
\item Corner. We consider a function $g:[0,\infty)\mapsto[0,\infty)$ such that $g(0)=0$ and
for $x>0,$ $g$ is $C^{2}$ with $g'(0)>0$.  

\item Cusp. We consider a function $g:[0,\infty)\mapsto[0,\infty)$ such that $g^{(\nu)}(0)=0$ for $0\le\nu<p$ ($p \in \mathbb N, p \geq 2$) \textcolor{black}{and $g^{(p)}(0) > 0$.}
\end{itemize}

\textcolor{black}{We consider the
part of $\Gamma_{0}$ around the corner to be defined by the union
of the graphs of $g$ and $-g$. In this case, both sides of the $x$-axis contribute the same amount of ``singularity". In the general case, where the corner is likely not symmetric about the $x$-axis, the error will be dominated by the most ``singular side".}'

In this new coordinate system, the
corner is at $(0,0)$. We then parameterize the part of $\Gamma_{0}$
that lies above the $x-$ axis by 
\[
\gamma(x):=\left(\begin{array}{c}
x\\
g(x)
\end{array}\right),
\]
as shown in Figure~\ref{fig:corner-diagram}. In this setup, the distance
function to $\Gamma_{0}$ is differentiable away from the positive
$x$-axis. 

Define the normal to $\gamma$ by 
\[
\vec{n}(x)=\frac{1}{\sqrt{1+g'(x)^{2}}}\left(\begin{array}{c}
g'(x)\\
-1
\end{array}\right)
\]
and consider the lines
\[
L(\eta;x):=\gamma(x)+\eta\vec{n}(x),
\]
which correspond to the characteristics of the Eikonal equation that
emanates from $\gamma$ before they intersect the $x-$axis. We first
find $\eta$ such that $L(\eta,x)$ intersects the $x-$axis, i.e.
\[
\left(\begin{array}{c}
x\\
g(x)
\end{array}\right)+\frac{\eta}{\sqrt{1+g'(x)^{2}}}\left(\begin{array}{c}
g'(x)\\
-1
\end{array}\right)=\left(\begin{array}{c}
X_{\eta}\\
0
\end{array}\right),
\]
where $X_{\eta}$ is the $x-$coordinate of the intersection point
between the $\eta-$level set and the $x-$axis. (See Figure~\ref{fig:corner-diagram}).

\begin{figure}[h]

\begin{centering}
\includegraphics[scale = 1]{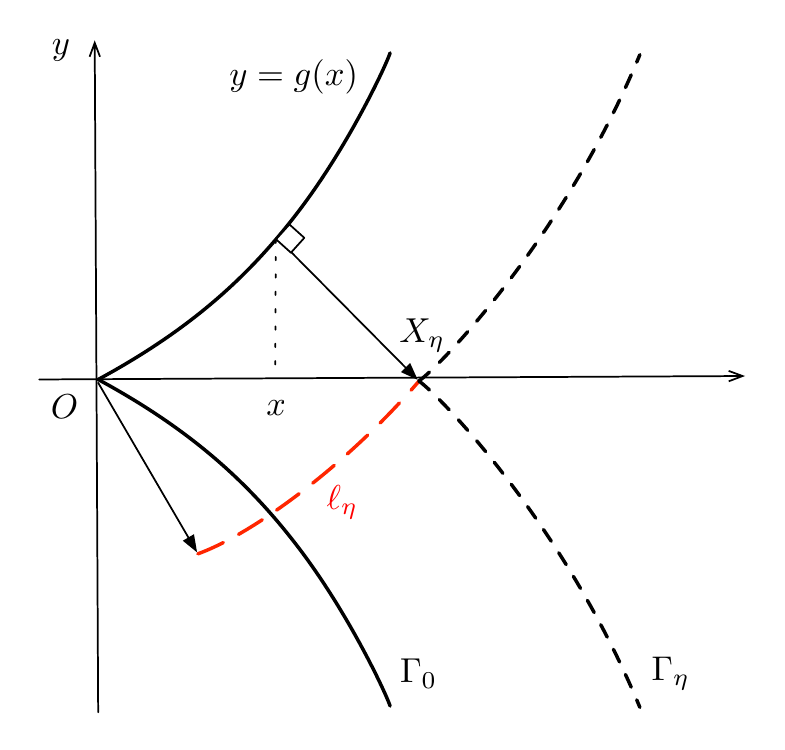}
\par\end{centering}
\caption{An illustration of a curve with a corner/cusp and a curve which is
$\eta$ distance away from it.\label{fig:corner-diagram}}

\end{figure}

Using the information
from the $y-$coordinates in the above vector equation we obtain
\begin{equation}
\eta=g(x)\sqrt{1+g'(x)^{2}}:=F(x).\label{eq:definition_eta_F}
\end{equation}
Thus, the $\eta(x)$-level set of the signed distance function to
$\Gamma_{0}$ has a corner or cusp at $(X_{\eta},0)$. Now we estimate
the length of the portion of $\Gamma_{0}$, the projection of which
to the $\eta(x)-$level set is missing, denoted by $l(x)$ and given
by 
\begin{equation}
l(x)=\int_{0}^{x}\sqrt{1+g'(s)}ds.
\end{equation}

If we now look at the integration of $\tilde{f}$ over a portion of
the $\eta-$level set of $d$ above the $x-$axis, we are missing
the corresponding integral
\textcolor{black}{
$$
l_{\eta(x)}^{+}[\tilde{f},d]  =\int_{0}^{x(\eta)}\tilde{f}(s,g(s))\sqrt{1+g'(s)^{2}}J_{s}ds,
$$
where $J_{s} = (1+\eta\kappa(s))$ is the Jacobian that accounts for the change in curvature between $\Gamma_{0}$ and $\Gamma_{\eta}$ (see \cite{kublik_tanushev_tsai13}). In this integral, we are integrating over the portion of $\Gamma_{\eta}$ that would be there if there was no corner (represented by the long dashed curve in Figure~\ref{fig:corner-diagram}). The term $\sqrt{1+g'(s)^2} ds$ is the arclength along $\Gamma_{0}$, and the term $\sqrt{1+g'(s)^2} J_{s}ds$ is the arclength along $\Gamma_{\eta}$. It follows that we have}

\begin{align*}
l_{\eta(x)}^{+}[\tilde{f},d]  & =\int_{0}^{x(\eta)}\tilde{f}(s,g(s))(1+\eta\kappa(s))\sqrt{1+g'(s)^{2}}ds\\
 & =\int_{0}^{x(\eta)}\tilde{f}(s,g(s))\sqrt{1+g'(s)^{2}}ds+\eta\int_{0}^{x(\eta)}\tilde{f}(s,g(s))\kappa(s)\sqrt{1+g'(s)^{2}}ds\\
 & =A^{+}(x(\eta))+\eta B^{+}(x(\eta)),
\end{align*}
where $\kappa(s)$ is the curvature of $\Gamma_{0}$ at the point
$(s,g(s)),$ and $x(\eta)=F^{-1}(\eta)$ where $F$ is given in
(\ref{eq:definition_eta_F}) and $F$ is invertible around $x=0$.
The invertibility of $F$ is proven in Lemma \ref{lem:xofeta}. Away
from the corner, the curve is smooth and therefore the error between
$S$ and $\mbox{\ensuremath{I_{0}}}$ is dominated by the effect of
the corner. We choose to focus on the corner for $x\in[0,b]$, $b>0.$ (See Figure~\ref{fig:corner-diagram-3}). 

\begin{figure}[h]

\begin{centering}
\includegraphics[scale = 1]{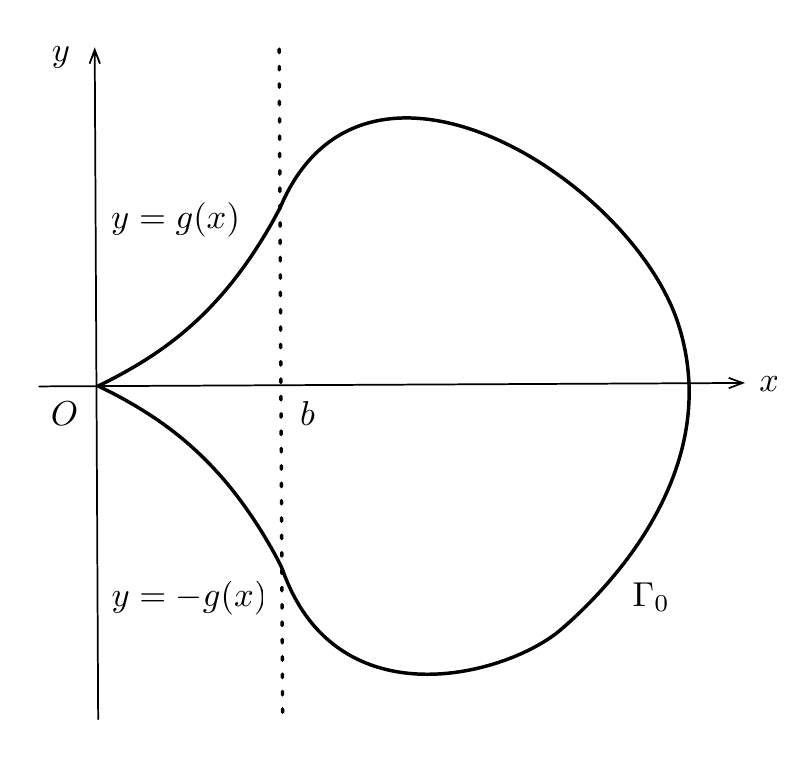}
\par\end{centering}
\caption{The closed curve $\Gamma_0$ with a corner at $(0,0)$.\label{fig:corner-diagram-3}}

\end{figure}

Thus the integral over one side of the
corner (above the $x-$axis) is 
\begin{align*}
G_{\eta}^{+} & =\int_{0}^{b}\tilde{f}(s,g(s))(1+\eta\kappa(s))\sqrt{1+g'(s)^{2}}ds\\
 & =\int_{0}^{x(\eta)}\tilde{f}(s,g(s))(1+\eta\kappa(s))\sqrt{1+g'(s)^{2}}ds+\int_{x(\eta)}^{b}\tilde{f}(s,g(s))(1+\eta\kappa(s))\sqrt{1+g'(s)^{2}}ds\\
 & =l_{\eta}^{+}+I_{\eta}^{+},
\end{align*}
where $I_{\eta}^{+}$ is the integration of $\tilde{f}$ along the
portion of $\Gamma_{\eta}$ above the $x-$axis. Thus
\[
I_{\eta}^{+}=G_{\eta}^{+}-l_{\eta}^{+}=G_{\eta}^{+}-A^{+}(x(\eta))-\eta B^{+}(x(\eta)).
\]

The result for the integral along the portion of $\Gamma_{\eta}$
below the $x-$axis is obtained similarly and thus
\[
I_{\eta}^{-}=G_{\eta}^{-}-l_{\eta}^{-}=G_{\eta}^{-}-A^{-}(x(\eta))-\eta B^{-}(x(\eta)),
\]
with 
\[
A^{-}(x(\eta))=\int_{0}^{x(\eta)}\tilde{f}(s,-g(s))\sqrt{1+g'(s)^{2}}ds,
\]
\[
B^{-}(x(\eta))=\int_{0}^{x(\eta)}\tilde{f}(s,-g(s))\kappa(s)\sqrt{1+g'(s)^{2}}ds.
\]

We point out that $l_{\eta}^{+}$ and $l_{\eta}^{-}$ are no longer
polynomials in $\eta.$ WLOG, we assume that $\Gamma_{0}$ has only
one corner and we denote by $\mathcal{P}_{\eta}^{+}$ and $\mathcal{P}_{\eta}^{-}$
the portion of $\Gamma_{\eta}$ above the $x-$axis and below the
$x-$axis respectively. By construction, the term $G_{\eta}^{+}+G_{\eta}^{-}+\int_{\Gamma_{0}\setminus(\mathcal{P}_{\eta}^{+}\cup\mathcal{P}_{\eta}^{-})}\tilde{f}(\mathbf{x})dS(\mathbf{x})$
does not have any corner any more. Thus we can apply Theorem \ref{thm:Theorem1}
and obtain 
\begin{equation}
G_{\eta}^{+}+G_{\eta}^{-}+\int_{\Gamma_{0}\setminus(\mathcal{P}_{\eta}^{+}\cup\mathcal{P}_{\eta}^{-})}\tilde{f}(\mathbf{x})dS(\mathbf{x})=I_{0}+A\eta,\mbox{ }A\in\mathbb{R},\label{eq:constA}
\end{equation}
which is equivalent to
\begin{equation}
I[\tilde{f},d](\eta):=I_{\eta}^{+}+I_{\eta}^{-}+\int_{\Gamma_{0}\setminus(\mathcal{P}_{\eta}^{+}\cup\mathcal{P}_{\eta}^{-})}\tilde{f}(\mathbf{x})dS(\mathbf{x})=I_{0}+A\eta-l_{\eta}^{+}-l_{\eta}^{-}.\label{eq:eq_corner}
\end{equation}
Note that the expression for the constant $A$ in (\ref{eq:constA})
can be obtained by using the expressions for the constants $A_{i}$
given in the proof of Theorem \ref{thm:Theorem1}. 
%\section{Main result}
%We now provide the main result of the paper, given for curves with
%corners or cusps in two dimensions.

Not surprisingly, it turns out that there is a fundamental difference in integration of parallel surfaces
near a corner and near a cusp on $\Gamma_0$. 
\begin{thm}
\label{thm:MainResult}Consider a curve $\Gamma_{0}$ in $\mathbb{R}^{2}$
such that $\Gamma_{0}$ has a corner at $(x_{0},y_{0})$.  \textcolor{black}{Let $d$ be the signed distance function to $\Gamma_{0}$ used to compute $S$}. Assume $f\in C^{0}(\Gamma_{0})$
and that the curvature $\kappa$ is continuous everywhere except at
the corner point. Assume that $g\in C^{2}([0,\infty),[0,\infty))$ with $g(0)=0$ and for $p\in\mathbb{N}$,
$g^{(\nu)}(0)=0$ for $0\le\nu<p$ and \textcolor{black}{$g^{(p)}(0)>0$}, such that the corner/cusp is modeled by the
graphs of $g$ and $-g$. In this new coordinate system, the corner/cusp
is at $(0,0).$ Suppose also that the averaging kernel $\delta_{\epsilon}$
is compactly supported in $[-\epsilon,\epsilon]$ with $m$ vanishing
moments such that %Define $F:[0,\infty)\mapsto[0,\infty)$ as $F(x)=g(x)\sqrt{1+g'(x)^{2}}.$
%If $F(x)=ax^{p}+o(x^{p})$ as $x\rightarrow0$ ($a\in\mathbb{R}\setminus\left\{ 0\right\} )$
%with $p\in\mathbb{N}$ , and 
 \[
\delta_{\epsilon}(\eta)=O(\epsilon^{-k})
\]
 as $\eta\rightarrow0$, ($k\in\mathbb{N}$).  Then 
$$
|S-I_{0}|=\begin{cases}
O\left (\epsilon^{2+m-k} \right ) & p=1\mbox{\mbox{ (corner)}}\\
O \left (\epsilon^{1+\frac{1}{p}-k} \right ) & p\geq2\mbox{ (cusp)}
\end{cases},
$$
 for small $\epsilon>0$. 
\end{thm}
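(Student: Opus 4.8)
The plan is to feed the structural identity \eqref{eq:eq_corner} into the coarea representation \eqref{eq:coarea}, let the vanishing moments of $\delta_\epsilon$ absorb the smooth part, and thereby reduce everything to estimating the deficiency terms $l_\eta^{\pm}$ as $\eta\to0$, which in turn is dictated by the behaviour of $x(\eta)=F^{-1}(\eta)$ near the singular point, where $F(x)=g(x)\sqrt{1+g'(x)^2}$ as in \eqref{eq:definition_eta_F}.

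\emph{Reduction to the deficiency.} Since $\int_{-\epsilon}^{\epsilon}\delta_\epsilon(\eta)\,d\eta=1$, \eqref{eq:coarea} gives $S-I_0=\int_{-\epsilon}^{\epsilon}\delta_\epsilon(\eta)\bigl(I[\tilde f,d](\eta)-I_0\bigr)d\eta$. I would split $\Gamma_0$ into the corner patch ($x\in[0,b]$ on the two branches $\pm g$) and its smooth complement. On the smooth complement, applying the change of variables behind Theorem~\ref{thm:Theorem1} arc by arc (in $\mathbb R^2$ the Jacobian is $1+\eta\kappa$) shows that the associated level-set integral is an affine function of $\eta$, whose linear part is annihilated by the first vanishing moment — so $m\ge1$ is used here. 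On the corner patch, \eqref{eq:eq_corner} reads $I[\tilde f,d](\eta)=I_0+A\eta-l_\eta^{+}-l_\eta^{-}$, and $A\int_{-\epsilon}^{\epsilon}\delta_\epsilon(\eta)\eta\,d\eta=0$ for the same reason. Hence
$$
|S-I_0|\ \le\ \int_{-\epsilon}^{\epsilon}\delta_\epsilon(\eta)\,\bigl|l_\eta^{+}+l_\eta^{-}\bigr|\,d\eta\ +\ (\text{terms of the same or higher order}),
$$
and the whole estimate comes down to the size of $l_\eta^{+}+l_\eta^{-}=A^{+}(x(\eta))+A^{-}(x(\eta))+\eta\bigl(B^{+}(x(\eta))+B^{-}(x(\eta))\bigr)$.

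\emph{The two regimes.} For a corner ($p=1$) one has $F(0)=0$ and $F'(0)=g'(0)\sqrt{1+g'(0)^2}>0$, so by Lemma~\ref{lem:xofeta} $F$ is a local diffeomorphism at $0$ and $x(\eta)$ expands in integer powers of $\eta$; since $A^{\pm}$ and $B^{\pm}$ are primitives of continuous functions, $l_\eta^{+}+l_\eta^{-}$ admits an expansion $\sum_{j=1}^{m}\ell_j\eta^{j}+O(\eta^{m+1})$ about $\eta=0$ (at the order permitted by the available smoothness). The $m$ vanishing moments kill the partial sum, and, using $\delta_\epsilon=O(\epsilon^{-k})$ on $[-\epsilon,\epsilon]$, $\int_{-\epsilon}^{\epsilon}\delta_\epsilon(\eta)\,O(\eta^{m+1})\,d\eta=O(\epsilon^{-k})\,O(\epsilon^{m+2})=O(\epsilon^{2+m-k})$. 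For a cusp ($p\ge2$), $g'(0)=0$, so $\sqrt{1+g'(x)^2}=1+O(x^{2p-2})$ and $F(x)=x^{p}G(x)$ with $G(0)=g^{(p)}(0)/p!>0$; inverting gives $x(\eta)=\eta^{1/p}H(\eta^{1/p})$ with $H(0)>0$, i.e. $x(\eta)=\Theta(\eta^{1/p})$. The integrands defining $A^{\pm},B^{\pm}$ being bounded near the vertex (continuity of $\kappa$ away from the corner, $g\in C^{2}$), we get $A^{\pm}(x(\eta))=O(\eta^{1/p})$ and $\eta B^{\pm}(x(\eta))=O(\eta^{1+1/p})$, so $|l_\eta^{+}+l_\eta^{-}|\le C\eta^{1/p}$ and
$$
\int_{-\epsilon}^{\epsilon}\delta_\epsilon(\eta)\,\bigl|l_\eta^{+}+l_\eta^{-}\bigr|\,d\eta\ \le\ C\epsilon^{-k}\int_{0}^{\epsilon}\eta^{1/p}\,d\eta\ =\ O(\epsilon^{1+1/p-k});
$$
this dominates the $O(\epsilon^{2-k})$ contribution coming from the rounded side of the parallel curve, since $1+\tfrac1p<2$.

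\emph{Where the difficulty lies.} The hard part is the two-sided analysis of $\eta\mapsto I[\tilde f,d](\eta)$ near $\eta=0$: on the side where the parallel curve pinches one sees the deficiency $l_\eta^{\pm}$, while on the opposite side it is rounded, and one must verify that \eqref{eq:eq_corner} (or its obvious counterpart) together with the claimed regularity really holds across $\eta=0$ — a finite Taylor expansion for $p=1$, so that the moment conditions can act, versus a single controlling fractional power $\eta^{1/p}$ for $p\ge2$. The accompanying technical nuisance is the regularity of $F^{-1}$ at the vertex: straightforward away from $x=0$ but, at $x=0$, requiring the factorisation $F(x)=x^{p}G(x)$ with $G(0)>0$ and a careful propagation of the resulting $\eta^{1/p}$ behaviour through the primitives $A^{\pm}$ and $B^{\pm}$; the invertibility itself is exactly Lemma~\ref{lem:xofeta}.
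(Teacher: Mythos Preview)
Your proposal is correct and follows essentially the same route as the paper: feed \eqref{eq:eq_corner} into the coarea representation \eqref{eq:coarea}, let the vanishing moments dispose of the affine term $A\eta$ (and the smooth-complement contribution), reduce to the deficiency $l_\eta^{\pm}=A^{\pm}(x(\eta))+\eta B^{\pm}(x(\eta))$, invoke Lemma~\ref{lem:xofeta} to get $x(\eta)\sim(\eta/a)^{1/p}$, and then bound $\int_{-\epsilon}^{\epsilon}\delta_\epsilon(\eta)\,\eta^{\gamma}\,d\eta=O(\epsilon^{-k}\epsilon^{\gamma+1})$ with $\gamma=m+1$ for $p=1$ and $\gamma=1/p$ for $p\ge2$, treating the rounded side $\eta<0$ separately via Theorem~\ref{thm:Theorem1}. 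The paper's proof organises the two signs of $\eta$ slightly more explicitly but is otherwise the same argument, including the (not fully justified under the bare $C^{2}$ hypothesis) assertion that for $p=1$ the one-sided expansion extends to a genuine Taylor polynomial of order $m+1$ in $\eta$.
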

Note that the number of vanishing moments of the kernel $\delta_{\epsilon}$
only plays a role in the case of a corner. For cusps, it is necessary
to construct a different class of kernels that integrate to zero when
multiplied by fractional powers of $\eta$. 

\section{Proof of Theorem \ref{thm:MainResult}}

We start with two technical lemmas that are needed to complete the
proof of Theorem \ref{thm:MainResult} and then give the proof of
Theorem \ref{thm:MainResult}.
\begin{lem}
\label{lem:UnifConv}Suppose $p\in\mathbb{N}$, and assume that $k:[0,\infty) \mapsto\mathbb{R}$
can be expressed as $k(x) ~= ~cx^{\gamma} ~+~ \iota(x)$ with $\gamma > p$, $c \in \mathbb R$ and $\lim_{x \rightarrow 0} \frac{\iota(x)}{x^{\gamma}} = 0$. 

Then if $\alpha_{p}>0$, the series
\[
\sum_{n=0}^{\infty}{\frac{1}{p} \choose n}\left(\frac{k(x)}{\alpha_{p}x^{p}}\right)^{n}
\]
 is uniformly convergent for $x \in [0,r)$ for some $r>0$. 
\end{lem}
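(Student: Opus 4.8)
The plan is to estimate the argument $u(x) := k(x)/(\alpha_p x^p)$ and show it tends to $0$ as $x\to 0^+$, so that the binomial series $\sum_n \binom{1/p}{n} u(x)^n = (1+u(x))^{1/p}$ converges uniformly on a small interval $[0,r)$ by comparison with a convergent numerical series. First I would write $k(x) = c x^\gamma + \iota(x)$ with $\gamma > p$, so that
\[
u(x) = \frac{k(x)}{\alpha_p x^p} = \frac{c}{\alpha_p} x^{\gamma - p} + \frac{\iota(x)}{\alpha_p x^p}.
\]
The first term tends to $0$ since $\gamma - p > 0$. For the second term, the hypothesis $\lim_{x\to 0}\iota(x)/x^\gamma = 0$ gives $\iota(x) = o(x^\gamma)$, hence $\iota(x)/x^p = o(x^{\gamma - p}) \to 0$ as well. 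Therefore $u(x)\to 0$ as $x\to 0^+$, and in particular there is an $r>0$ such that $|u(x)| \le \tfrac12$ for all $x\in[0,r)$ (at $x=0$ we interpret $u(0)=0$, consistent with the limit).

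Next I would invoke the standard fact that the binomial series $\sum_{n=0}^\infty \binom{1/p}{n} t^n$ has radius of convergence $1$ and converges to $(1+t)^{1/p}$ for $|t|<1$; moreover, for $|t|\le \tfrac12$ the tail is controlled uniformly, since $\left|\binom{1/p}{n} t^n\right| \le \left|\binom{1/p}{n}\right| 2^{-n} =: M_n$ with $\sum_n M_n < \infty$ (the series $\sum_n \binom{1/p}{n} (1/2)^n$ converges absolutely, $1/2$ being inside the radius of convergence). Applying the Weierstrass $M$-test with these $M_n$ on the interval $[0,r)$, where by the previous paragraph $|u(x)|\le\tfrac12$, yields uniform convergence of $\sum_{n=0}^\infty \binom{1/p}{n} u(x)^n$ on $[0,r)$, which is exactly the claim.

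The only mild subtlety — and the step I would be most careful about — is the behavior at $x=0$ and the passage from "$u(x)\to 0$" to "$|u(x)|\le \tfrac12$ on a genuine interval $[0,r)$": one must check that $\iota(x)/x^p$ really does extend continuously (by $0$) to $x=0$, which follows from $\iota(x)/x^p = (\iota(x)/x^\gamma)\cdot x^{\gamma-p}$ with both factors tending to $0$. Everything else is the routine $M$-test argument. I do not expect any genuine obstacle here; the lemma is a soft analytic fact whose role is to license, later in the proof of Theorem~\ref{thm:MainResult}, the term-by-term integration of the expansion of $l_\eta^{\pm}$ in powers of $\eta$ (equivalently of $x(\eta)$) against the kernel $\delta_\epsilon$.
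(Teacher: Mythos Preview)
Your proof is correct and follows the same core strategy as the paper: show that $u(x)=k(x)/(\alpha_p x^p)\to 0$ as $x\to 0^+$, then apply the Weierstrass $M$-test. Your execution is a bit cleaner than the paper's. The paper additionally argues that $x\mapsto k(x)/x^p$ is monotone near $0$ (so that a single value $\zeta=k(\rho)/(\alpha_p\rho^p)<1$ dominates on $[0,\rho)$), and then spends half a page proving that the sequence $\left|\binom{1/p}{n}\right|$ is decreasing and bounded by $1$, so that $M_n=\zeta^n$ works. You bypass both of these steps by taking $M_n=\left|\binom{1/p}{n}\right|2^{-n}$ and observing that $\sum_n M_n<\infty$ simply because $1/2$ lies strictly inside the radius of convergence of the binomial series; this is more direct and avoids the monotonicity claim (which in the paper is argued somewhat informally and tacitly uses a sign assumption on $c$).

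One small correction on the lemma's role: it is actually invoked in the proof of Lemma~\ref{lem:xofeta}, to justify interchanging a limit and an infinite sum when computing the asymptotic inverse $F^{-1}(\eta)$, rather than for term-by-term integration against $\delta_\epsilon$ in Theorem~\ref{thm:MainResult}.
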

\begin{proof} The assumptions on $k$ imply that $\lim_{x\rightarrow0}\frac{k(x)}{x^{p}}=0$. Thus, there exists $r_{0}>0$ such that 
\begin{equation}
\label{inegalite1}
\left|\frac{k(x)}{\alpha_{p}x^{p}}\right| =\frac{k(x)}{\alpha_{p}x^{p}}<1 \mbox{ on } x \in [0,r_{0}).
\end{equation}

In addition, since $x \mapsto \frac{k(x)}{x^p}$ is dominated by $cx^\gamma$ in a neighborhood of $0$, it follows that in a neighborhood of $0$, the function $x \mapsto \frac{k(x)}{x^p}$ behaves the same as $cx^\gamma$ and thus is increasing around $0$. It follows that there exists $r_{1}>0$ such that for all $0 < x<y <r_{1}$,
\begin{equation} 
\label{inegalite2}
0 < \frac{k(x)}{x^p}  < \frac{k(y)}{y^p}.
\end{equation}
We define $r = \min(r_{0},r_{1})$. 

Since
$$
\left|{\frac{1}{p} \choose n}\right| \left|\frac{k(x)}{\alpha_{p}x^{p}}\right|^n \leq \left|\frac{k(x)}{\alpha_{p}x^{p}}\right|^n
$$ the series is convergent for $x \in [0,r)$. This inequality comes from the fact that the
sequence $\left|{\frac{1}{p} \choose n}\right|$ is decreasing in
$n$, which we prove at the end.  

We now show that the series is uniformly convergent in that interval.
Pick $0<\rho<r$. Then for $ 0< x<\rho<r,$ we have by \eqref{inegalite2} and \eqref{inegalite1}
\[
0 < \frac{k(x)}{x^{p}} <\frac{k(\rho)}{\rho^{p}}<\alpha_{p}.
\]
 Thus we have 
\[
\left|{\frac{1}{p} \choose n}\left(\frac{k(x)}{\alpha_{p}x^{p}}\right)^{n}\right|<\left|{\frac{1}{p} \choose n}\frac{k(\rho)}{\alpha_{p}\rho^{p}}^{n}\right|:=\left|{\frac{1}{p} \choose n}\right|\zeta^{n}\leq\zeta^{n},
\]
 with $0 < \zeta<1$. Consequently, the series $\sum_{n=0}^{\infty}|{\frac{1}{p} \choose n}|\zeta^{n}$
converges. Thus by the Weierstrass M-test, it follows that the series
converges uniformly for $0 \leq x<\rho$. Since $\rho$ is arbitrary in
$[0,r)$, the series converges uniformly on $[0,r)$. 

We now show that the  
sequence $\left|{\frac{1}{p} \choose n}\right|$ is decreasing in
$n$.  Let $u_{n}^{p}:=\left|{\frac{1}{p} \choose n}\right|$.
For $p=1$, we have $u_{0}^{1}=u_{1}^{1}=1$ and $u_{n}^{1}=0$ for
$n\geq2$. Thus, $\forall n\geq0,u_{n}^{1}\leq1$. Now, consider $p\geq2.$
By definition of the sequence, we have
\[
\forall k\in\mathbb{N},{\frac{1}{p} \choose 2k}=-u_{2k}^{p}\leq0,u_{2k-1}^{p}\geq0.
\]

For $k\geq1,$ we have
\begin{align*}
u_{2k+1}^{p}-u_{2k}^{p} & =\frac{\frac{1}{p}(\frac{1}{p}-1)(\frac{1}{p}-2)\cdots(\frac{1}{p}-2k)}{(2k+1)!}-\frac{-\frac{1}{p}(\frac{1}{p}-1)(\frac{1}{p}-2)\cdots(\frac{1}{p}-2k+1)}{(2k)!}\\
 & =\frac{\frac{1}{p}(\frac{1}{p}-1)(\frac{1}{p}-2)\cdots(\frac{1}{p}-2k+1)}{(2k)!}\left(1+\frac{\frac{1}{p}-2k}{2k+1}\right),\\
 & =\frac{\frac{1}{p}(\frac{1}{p}-1)(\frac{1}{p}-2)\cdots(\frac{1}{p}-2k+1)}{(2k)!}\left(\frac{1+\frac{1}{p}}{2k+1}\right)<0,
\end{align*}
since the numerator of the first fraction is the product of an even
number of terms, and thus is negative. 

Similarly, for $k\geq1,$ we have
\[
u_{2k}^{p}-u_{2k-1}^{p}=\frac{\frac{1}{p}(\frac{1}{p}-1)(\frac{1}{p}-2)\cdots(\frac{1}{p}-2k+2)}{(2k-1)!}\left(-\frac{\frac{1}{p}+1}{2k}\right)<0,
\]

since the numerator in the first fraction is the product of an odd
number of terms, which is positive. Thus

\[
\forall n\geq1,p\geq1,u_{n+1}^{p}-u_{n}^{p}\leq0.
\]
Since $u_{0}^{p}=1,\forall p\geq1$ and $u_{1}^{p}=\frac{1}{p},\forall p\geq1,$
it follows that 
\[
\forall n\geq0,p\geq1,u_{n+1}^{p}-u_{n}^{p}\leq0.
\]
Thus the sequence is decreasing and bounded above by its first term,
which is $1$. 

\end{proof}

\begin{lem}
\label{lem:xofeta}Assume that $g\in C^{2}([0,\infty),[0,\infty))$ with
$g(0)=0$, and for $p\in\mathbb{N}$,
$g^{(\nu)}(0)=0$ for $0\le\nu<p$ and \textcolor{black}{$g^{(p)}(0)>0$}. Let $F:[0,\infty)\mapsto[0,\infty)$
be defined as $F(x)=g(x)\sqrt{1+g'(x)^{2}}.$ Then locally around
$x=0$, $F$ is invertible and $F(x)=ax^{p}+o(x^{p})$ as $x\rightarrow0$
($a\in\mathbb{R}\setminus\left\{ 0\right\} )$ with $p\in\mathbb{N}$, and
\[
x(\eta)=F^{-1}(\eta)=\left(\frac{\eta}{a}\right)^{\frac{1}{p}}+o(\eta^{\frac{1}{p}})
\]
as $\eta\rightarrow0$. 
\end{lem}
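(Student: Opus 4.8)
The plan is to establish the local behavior of $F$ near $x=0$ by a direct Taylor expansion, then apply the inverse function theorem in a form adapted to the degeneracy at $x=0$, and finally invert the leading-order relation $\eta = ax^p$ using the binomial series, which is precisely where Lemma~\ref{lem:UnifConv} is designed to be used.

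\medskip

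\noindent\textbf{Step 1: Leading-order expansion of $F$.} First I would write $F(x) = g(x)\sqrt{1+g'(x)^2}$ and expand each factor. Since $g\in C^2$ with $g^{(\nu)}(0)=0$ for $0\le \nu < p$ and $g^{(p)}(0)>0$, we have (when $p\ge 2$, using the $C^2$ hypothesis carefully, or interpreting the expansion via Taylor's theorem with the appropriate remainder) $g(x) = \tfrac{g^{(p)}(0)}{p!}x^p + o(x^p)$ and $g'(x) = \tfrac{g^{(p)}(0)}{(p-1)!}x^{p-1} + o(x^{p-1})$, so $g'(x)^2 = o(x^{p-1}) \to 0$; for $p=1$ we instead have $g'(0)>0$ and $\sqrt{1+g'(x)^2}\to \sqrt{1+g'(0)^2}>0$. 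In either case $\sqrt{1+g'(x)^2} = c_0 + o(1)$ with $c_0 = \sqrt{1+g'(0)^2} > 0$ (note $c_0 = 1$ when $p\ge 2$). Multiplying, $F(x) = a x^p + o(x^p)$ with $a = c_0\, g^{(p)}(0)/p! \neq 0$. This gives the claimed form $F(x) = ax^p + o(x^p)$.

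\medskip

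\noindent\textbf{Step 2: Invertibility of $F$ near $0$.} Since $F(x)/x^p \to a \neq 0$ and $F$ is continuous and (for $x>0$) $C^1$, I would show $F$ is strictly increasing on some interval $[0,r)$: differentiating, $F'(x) = g'(x)\sqrt{1+g'(x)^2} + g(x)\,g'(x)g''(x)/\sqrt{1+g'(x)^2}$, and the leading term is $g'(x)\sqrt{1+g'(x)^2}$, which behaves like $c_0\, g^{(p)}(0) x^{p-1}/(p-1)! > 0$ for small $x>0$ (and equals $g'(0)c_0>0$ at $x=0$ when $p=1$); the second term is of strictly higher order, so $F'(x) > 0$ on $(0,r)$ for $r$ small enough. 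Hence $F$ is a continuous strictly increasing bijection from $[0,r)$ onto $[0,F(r))$, so $F^{-1}$ exists and is continuous on a neighborhood of $0$.

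\medskip

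\noindent\textbf{Step 3: Expansion of $F^{-1}$.} This is the main obstacle, and it is what Lemma~\ref{lem:UnifConv} addresses. Write $F(x) = ax^p + k(x)$ where $k(x) = F(x) - ax^p = o(x^p)$; by Step 1 (and a slightly finer Taylor expansion, since $g\in C^2$ gives $g(x) = \tfrac{g^{(p)}(0)}{p!}x^p + o(x^p)$ with a remainder one would like to write as $c\,x^\gamma + \iota(x)$, $\gamma > p$) one checks $k$ satisfies the hypotheses of Lemma~\ref{lem:UnifConv}. Then from $\eta = ax^p + k(x) = ax^p\bigl(1 + k(x)/(ax^p)\bigr)$ one formally gets
\[
x = \left(\frac{\eta}{a}\right)^{1/p}\left(1 + \frac{k(x)}{ax^p}\right)^{-1/p}
= \left(\frac{\eta}{a}\right)^{1/p}\sum_{n=0}^\infty \binom{-1/p}{n}\left(\frac{k(x)}{ax^p}\right)^n,
\]
and Lemma~\ref{lem:UnifConv} (applied with $\alpha_p = |a|$, or a sign-adjusted version; note the lemma as stated covers $\binom{1/p}{n}$ but the same decreasing-coefficient argument applies to $\binom{-1/p}{n}$) guarantees this series converges uniformly for $x\in[0,r)$, hence the manipulation is legitimate. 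Since $k(x)/(ax^p)\to 0$ as $x\to 0$, every term with $n\ge 1$ contributes a factor tending to $0$, so the bracketed sum is $1 + o(1)$; combined with $x \to 0$ as $\eta\to 0$ (from continuity of $F^{-1}$) and $x \asymp (\eta/a)^{1/p}$, we conclude
\[
x(\eta) = F^{-1}(\eta) = \left(\frac{\eta}{a}\right)^{1/p} + o\!\left(\eta^{1/p}\right)
\]
as $\eta \to 0^+$, which is the desired conclusion. The one delicate point to be careful about is justifying that $k(x) = o(x^p)$ can be refined to the $c x^\gamma + \iota(x)$ form required by Lemma~\ref{lem:UnifConv} using only $g\in C^2$; if $p \le 2$ this follows from the second-order Taylor remainder, and for the statement's purposes one can also simply invoke that $F^{-1}$ is continuous with $F^{-1}(\eta)^p \sim \eta/a$, which already yields the stated $o(\eta^{1/p})$ estimate without needing the full series.
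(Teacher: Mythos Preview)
Your proposal is correct and tracks the paper's strategy closely: establish $F(x)=ax^p+o(x^p)$, show $F'>0$ near $0$ for local invertibility, then extract the leading term of $F^{-1}$ via a binomial expansion controlled by Lemma~\ref{lem:UnifConv}. The only bookkeeping difference is in Step~3: the paper postulates $F^{-1}(\eta)=\beta_1\eta^{1/p}+w(\eta)$ and verifies $F^{-1}(F(x))=x+o(x)$ by expanding $F(x)^{1/p}$ (exponent $+1/p$, so Lemma~\ref{lem:UnifConv} applies verbatim), whereas you solve $\eta=ax^p\bigl(1+k/(ax^p)\bigr)$ directly for $x$, landing on exponent $-1/p$. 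Your remark that the decreasing-coefficient argument extends to $\binom{-1/p}{n}$ is correct (the ratio $\bigl|\binom{-1/p}{n+1}\bigr|\big/\bigl|\binom{-1/p}{n}\bigr|=(n+1/p)/(n+1)\le 1$ for $p\ge 1$), so Lemma~\ref{lem:UnifConv} adapts without trouble. Both the paper and you leave the refinement of $k(x)=o(x^p)$ to the precise $cx^\gamma+\iota(x)$ form required by Lemma~\ref{lem:UnifConv} as an unproved assumption; you at least flag it. Your closing observation is the real simplification: from $F(x)/x^p\to a$ and continuity of $F^{-1}$, setting $x=F^{-1}(\eta)$ gives $\eta/x^p\to a$ and hence $x/\eta^{1/p}\to a^{-1/p}$ immediately, bypassing the binomial series and Lemma~\ref{lem:UnifConv} altogether --- the paper does not take this shortcut.
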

\begin{proof}
Since $g\in C^{2}([0,\infty))$, then $F(x)=g(x)\sqrt{1+g'(x)^{2}}$
is in $C^{1}([0,\infty))$, with 
\[
F'(x)=\frac{g'(x)(1+g'(x)^{2})+g(x)g'(x)g''(x)}{\sqrt{1+g'(x)^{2}}}=\frac{g'(x)}{\sqrt{1+g'(x)^{2}}}(1+g'(x)^{2}+g(x)g''(x)).
\]

\begin{itemize}
\item Case 1. Corner case: suppose $g'(0)>0$. Then $F'(0)=\frac{g'(0)(1+g'(0)^{2})}{\sqrt{1+g'(0)^{2}}}>0$.
Since $F'$ is continuous on $[0,\infty),$ one can find a neighborhood
of $x=0$, $[0,\nu)$ (for some $\nu>0),$ such that for all $x\in[0,\nu),$
we have $F'(x)>0$. Thus, $F$ is strictly increasing on $[0,\nu)$
and therefore invertible on $[0,\nu)$, i.e.
\[
\forall x\in[0,\nu), x=F^{-1}(\eta)
\]
 with $F^{-1}(0)=0$ (since $g(0)=0\Rightarrow F(0)=0$). Since $F\in C^{1}([0,\infty))$
with $F'(0)>0$, it follows that $F$ has can be written as 
\[
F(x)=a_{1}x+q(x),
\]
 where $a_{1}=F'(0)>0$, and $\lim_{x\rightarrow0}\frac{q(x)}{x}=0.$
Thus $F(x)=a_{1}x+o(x)$ as $x\rightarrow0$ with $a_{1}\neq0$. Let
us look for its local inverse around $x=0$, $F^{-1}$, as
\begin{equation}
F^{-1}(\eta)=b_{1}\eta+h(\eta),\label{eq:inverseF_C2}
\end{equation}
 such that $\lim_{\eta\rightarrow0}\frac{h(\eta)}{\eta}=0$. Then
using $F^{-1}(F(x))=x$ for $x\in[0,\nu)$, we obtain
\begin{align*}
F^{-1}(F(x)) & =F^{-1}(a_{1}x+q(x))\\
 & =b_{1}(a_{1}x+q(x))+h(a_{1}x+q(x))\\
 & =a_{1}b_{1}x+b_{1}q(x)+h(a_{1}x+q(x))\\
 & =a_{1}b_{1}x+R(x),
\end{align*}
 where $R(x)= b_{1}q(x)+h(a_{1}x+q(x))$.
If we choose $b_{1}=\frac{1}{a_{1}}$, then for $x\in[0,\nu)$ we
have 
\[
F^{-1}(F(x))=x+R(x).
\]
 Now $F^{-1}$ is the correct inverse if $R(x)=0$ but we cannot know
what $R$ is since we do not have an expression for $q$ and $h$.
Nevertheless, asymptotically around $x=0$, $R$ needs to satisfy
$\lim_{x\rightarrow0}\frac{R(x)}{x}=0$. Let us then calculate this
limit:
\begin{align*}
\lim_{x\rightarrow0}\frac{R(x)}{x} & =\lim_{x\rightarrow0}\frac{b_{1}q(x)+h(a_{1}x+q(x))}{x}\\
 & = b_{1}\lim_{x\rightarrow0}\frac{q(x)}{x}+\lim_{x\rightarrow0}\frac{h(a_{1}x+q(x))}{x}\\
 & =0,
\end{align*}
 since $\lim_{x\rightarrow0}\frac{q(x)}{x}=0$
and $\lim_{x\rightarrow0}\frac{h(a_{1}x+q(x))}{x}=\lim_{y\rightarrow0}\frac{h(y)}{y}=0$
with $y=a_{1}x+q(x)=a_{1}x+o(x)\rightarrow_{x\rightarrow0}0$.
Thus if the inverse $F^{-1}$ is of the form (\ref{eq:inverseF_C2}),
we have the correct asymptotic behavior for $F^{-1}\circ F$. By the
unicity of the inverse, it follows that necessarily around $\eta=0$
we have 
\[
F^{-1}(\eta)=\frac{\eta}{a_{1}}+h(\eta),
\]
with $\lim_{\eta\rightarrow0}\frac{h(\eta)}{\eta}=0$.
\item Case 2: Cusp case: suppose $g^{(\nu)}(0)=0$ for $0\le\nu<p$ ($p \in \mathbb N, p \geq 2$) \textcolor{black}{and $g^{(p)}(0)>0$}. Then $F'(0)=0$. First, let's
show that $F$ is invertible and to do so, we will show that $F'(x)>0$
on an interval $(0,\mu),$ for some $\mu>0$. \textcolor{black}{Since $g \in C^2([0,\infty),[0,\infty))$ and satisfies $g(0)=g'(0) = 0$, it follows that in a neighborhood of $0$, say $(0,\mu)$ for some $\mu >0$, $g$ is strictly increasing and concave up. Thus $F'(x)>0$ on $(0,\mu)$ for some $\mu>0$. We therefore conclude that $F$ is invertible in a neighborhood of $0$.}

Now since $g\in C^{2}([0,\infty), [0,\infty))$ with $g(0)=g'(0)=0$, it follows
that $F \in C^{1}([0,\infty), [0,\infty))$ with $F(0)=F'(0)=0$. Let us assume
that we can write $F$ as follows
\[
F(x)=\alpha_{p}x^{p}+k(x),
\]
 with $\alpha_{p}>0$, and $k(x) = cx^{\gamma} + \iota(x)$ with $\gamma > p$, $c \in \mathbb R$ and $\lim_{x \rightarrow 0} \frac{\iota(x)}{x^{\gamma}} = 0$. Note that these assumptions imply that $\lim_{x\rightarrow0}\frac{k(x)}{x^{p}}=0$.
 
We then look for its local inverse $F^{-1}$
as
\[
F^{-1}(\eta)=\beta_{1}\eta^{\frac{1}{p}}+w(\eta),
\]
 such that $\lim_{\eta\rightarrow0}\frac{w(\eta)}{\eta^{\frac{1}{p}}}=0$.
Using $F^{-1}(F(x))=x$ on $(0,\mu)$, we obtain
\begin{align*}
F^{-1}(F(x)) & =\beta_{1}F(x)^{\frac{1}{p}}+w(F(x))\\
 & =\beta_{1}(\alpha_{p}x^{p}+k(x))^{\frac{1}{p}}+w(\alpha_{p}x^{p}+k(x))\\
 %& =\beta_{1}\left(\alpha_{p}x^{p}\left(1+\frac{k(x)}{\alpha_{p}x^{p}}\right)\right)^{\frac{1}{p}}+l(\alpha_{p}x^{p}+k(x))\\
 %& =\beta_{1}\alpha_{p}^{\frac{1}{p}}x\left(1+\frac{k(x)}{\alpha_{p}x^{p}}\right)^{\frac{1}{p}}+l(\alpha_{p}x^{p}+k(x))\\
 & =\beta_{1}\alpha_{p}^{\frac{1}{p}}x\sum_{n=0}^{\infty}{\frac{1}{p} \choose n}\left(\frac{k(x)}{\alpha_{p}x^{p}}\right)^{n}+w(\alpha_{p}x^{p}+k(x))\\
% & =\beta_{1}\alpha_{p}^{\frac{1}{p}}x\left(1+\sum_{n=1}^{\infty}{\frac{1}{p} \choose n}\left(\frac{k(x)}{\alpha_{p}x^{p}}\right)^{n}\right)+l(\alpha_{p}x^{p}+k(x))\\
 & =\beta_{1}\alpha_{p}^{\frac{1}{p}}x+T(x),
\end{align*}
 where $T(x)=\beta_{1}\alpha_{p}^{\frac{1}{p}}x\sum_{n=1}^{\infty}{\frac{1}{p} \choose n} \left (\frac{k(x)}{\alpha_{p}x^{p}} \right )^{n}+w(\alpha_{p}x^{p}+k(x))$.
If we choose $\beta_{1}=\alpha_{p}^{\frac{-1}{p}}$, then we have
$F^{-1}(F(x))=x+T(x).$ It remains to show that this gives the correct
asymptotic behavior at $x=0$, namely that $\lim_{x\rightarrow0}\frac{T(x)}{x}=0$.
\begin{align*}
\lim_{x\rightarrow0}\frac{T(x)}{x} & =\lim_{x\rightarrow0}\frac{\beta_{1}\alpha_{p}^{\frac{1}{p}}x\sum_{n=1}^{\infty}{\frac{1}{p} \choose n} \left (\frac{k(x)}{\alpha_{p}x^{p}} \right )^{n}+w(\alpha_{p}x^{p}+k(x))}{x}\\
 & =\beta_{1}\alpha_{p}^{\frac{1}{p}}\lim_{x\rightarrow0}\sum_{n=1}^{\infty}{\frac{1}{p} \choose n} \left (\frac{k(x)}{\alpha_{p}x^{p}} \right )^{n}+\lim_{x\rightarrow0}\frac{w(\alpha_{p}x^{p}+k(x))}{x}\\
 & =\beta_{1}\alpha_{p}^{\frac{1}{p}}\sum_{n=1}^{\infty}{\frac{1}{p} \choose n}\frac{1}{\alpha_{p}^{n}}\lim_{x\rightarrow0} \left (\frac{k(x)}{x^{p}} \right )^{n}+\lim_{x\rightarrow0}\frac{w(\alpha_{p}x^{p}+k(x))}{x},
\end{align*}
 where we have used the uniform convergence of the series $\sum_{n=1}^{\infty}{\frac{1}{p} \choose n} \left (\frac{k(x)}{\alpha_{p}x^{p}} \right )^{n}$
to interchange the sum and the limit by Lemma \ref{lem:UnifConv}.
Since $\lim_{x\rightarrow0}\frac{k(x)}{x^{p}}=0$ it follows that for all $n\geq1,$ we have $\lim_{x\rightarrow0} \left (\frac{k(x)}{x^{p}} \right )^{n}=0$.
Additionally, $\lim_{x\rightarrow0}\frac{w(\alpha_{p}x^{p}+k(x))}{x}~=~\lim_{y\rightarrow0}\frac{w(y)}{y^{\frac{1}{p}}}~=~0$
with $y=\alpha_{p}x^{p}+k(x)=O(x^{p})\Rightarrow x=O(y^{\frac{1}{p}})$.
Therefore $\lim_{x\rightarrow0}\frac{T(x)}{x}=0$, hence leading to
the desired asymptotic behavior. Since the inverse is unique, it follows
that asymptotically around $\eta=0$ we have
\[
F^{-1}(\eta)=\left(\frac{\eta}{\alpha_{p}}\right)^{\frac{1}{p}}+w(\eta),
\]
 where $\lim_{\eta\rightarrow0}\frac{w(\eta)}{\eta^{\frac{1}{p}}}=0$.
\end{itemize}
 \end{proof}

\begin{proof}
(of Theorem \ref{thm:MainResult}). As shown earlier we have for $\eta>0$,
\[
l_{\eta}^{\pm}=A^{\pm}(x(\eta))+\eta B^{\pm}(x(\eta)).
\]
Since $f$ is continuous and $\kappa$ is continuous everywhere except
at the corner point, we have $ $$f(s,\pm g(s))\sqrt{1+g'(s)^{2}}=O(1)$
and $f(s,\pm g(s))\kappa(s)\sqrt{1+g'(s)^{2}}=O(1)$ as $x\rightarrow0$.
Thus
\begin{align*}
A^{\pm}(x(\eta)) & =\int_{0}^{x(\eta)}f(s,\pm g(s))\sqrt{1+g'(s)^{2}}ds\\
 & =O(\eta^{\frac{1}{p}})\mbox{ as }\eta\rightarrow0^{+},
\end{align*}
 and 
\begin{align*}
B^{\pm}(x(\eta)) & =\int_{0}^{x(\eta)}f(s,\pm g(s))\kappa(s)\sqrt{1+g'(s)^{2}}ds\\
 & =O(\eta^{\frac{1}{p}})\mbox{ as }\eta\rightarrow0^{+}
\end{align*}
by using Lemma \ref{lem:xofeta}. Thus
\[
l_{\eta}^{\pm}=O(\eta^{\frac{1}{p}})\mbox{ as }\eta\rightarrow0^{+}.
\]
 Therefore, using (\ref{eq:eq_corner}) , we have 
\[
I[\tilde{f},d](\eta)=I_{0}+O(\eta^{\frac{1}{p}})\mbox{ as }\eta\rightarrow0^{+}.
\]
For $\eta<0$, since everything is smooth on that side, we can apply
Theorem \ref{thm:Theorem1} to obtain
\[
I[\tilde{f},d](\eta)=I_{0}+O(\eta)\mbox{ as }\eta\rightarrow0^{-}.
\]

For $p=1$, we have
\begin{align*}
S & =\int_{\epsilon}^{\epsilon}I[\tilde{f},d](\eta)\delta_{\epsilon}(\eta)d\eta\\
 & =\int_{-\epsilon}^{\epsilon}I_{0}\delta_{\epsilon}(\eta)d\eta+O\left(\int_{-\epsilon}^{\epsilon}\eta\delta_{\epsilon}(\eta)d\eta\right)\\
 & =I_{0}+O\left(\int_{-\epsilon}^{\epsilon}\eta\delta_{\epsilon}(\eta)d\eta\right).
\end{align*}

In this case, $I[\tilde{f},d](\eta)$ only contains integer powers
of $\eta$ (Taylor series). It follows that since $\delta_{\epsilon}$
has $m$ vanishing moments, 
\begin{align*}
S & =I_{0}+O\left(\int_{\epsilon}^{\epsilon}\eta^{m+1}\delta_{\epsilon}(\eta)d\eta\right)\\
 & =I_{0}+O\left(\epsilon^{-k}\int_{-\epsilon}^{\epsilon}\eta^{m+1}d\eta\right)\\
 & =I_{0}+O(\epsilon^{m+2-k}).
\end{align*}

For $p\geq2,$ we have 
\begin{align*}
S & =\int_{-\epsilon}^{\epsilon}I[\tilde{f},d](\eta)\delta_{\epsilon}(\eta)d\eta\\
 & =\int_{-\epsilon}^{0}I[\tilde{f},d](\eta)\delta_{\epsilon}(\eta)d\eta+\int_{0}^{\epsilon}I[\tilde{f},d](\eta)\delta_{\epsilon}(\eta)d\eta\\
 & =\int_{-\epsilon}^{\epsilon}I_{0}\delta_{\epsilon}(\eta)d\eta+O\left(\int_{-\epsilon}^{0}\eta\delta_{\epsilon}(\eta)d\eta\right)+O\left(\int_{0}^{\epsilon}\eta^{\frac{1}{p}}\delta_{\epsilon}(\eta)d\eta\right)\\
 & =I_{0}+O\left(\epsilon^{-k}\int_{-\epsilon}^{0}\eta d\eta\right)+O\left(\epsilon^{-k}\int_{0}^{\epsilon}\eta^{\frac{1}{p}}d\eta\right)\\
 & =I_{0}+O(\epsilon^{-k+2})+O(\epsilon^{-k+\frac{1}{p}+1})\\
 & =I_{0}+O(\epsilon^{1+\frac{1}{p}-k}).
\end{align*}
\end{proof}

\section{Numerical simulations}

In this section, we present a few numerical computations aiming at
demonstrating the unique properties of the proposed approach to surface
integrals using implicit representations. These properties include:
\begin{enumerate}
\item High order approximations of smooth or piecewise smooth interfaces
with the use of sufficiently regular level set functions.
\item Analytically exact integrals and highly accurate numerical approximations
with the help of a sufficiently regular level set function and
constant-in-normal extensions of the integrands.
\item The potential in computing singular integrals using uniform Cartesian
grids.
\end{enumerate}
These properties are the consequences of the use of special averaging
kernels. The numerical computations presented in this section will
involve the following kernels, constructed in \cite{chen_tsai}:
\begin{itemize}
\item A $C^{\infty}$ kernel with one vanishing moment:
\[
\delta_{\infty,1}(r):=\exp\left(\frac{2}{(2r-1)^{2}-1}\right)(ar+b)\chi_{[0,1]}(r),
\]
\[
a=-261.5195892865372,b=145.7876577089403.
\]
\item A $C^{\infty}$ kernel with one vanishing moment:
\[
\delta_{\rho,\infty,1}(r):=\exp\left(\frac{1}{2(r^{2}-1)}\right)(a_\rho r+b_\rho)\chi_{[\rho,1]}(r).
\]
This kernel is designed specifically for integrands with an integrable singularity. The support of the kernel, which is $\rho$ away from the singularity, is constructed to mitigate the effect of the singularity. In the following computations, $\rho$ is taken to
be $0.1$, and  \[
a_{0.1}=-759.2781934172483,b_{0.1}=446.2604260472818.
\]
 
\item A $C^{\infty}$ kernel with two vanishing moments :
\[
\delta_{\infty,2}(r):=\exp\left(\frac{2}{(2r-1)^{2}-1}\right)(ar^{2}+br+c)\chi_{[0,1]}(r),
\]
$a=3196.1015220946833,b=-3457.6211113812255,c=852.9832518883903.$
\end{itemize}

\textcolor{black}{In the examples below, we use the Cartesian grids $h\mathbb{Z}^n\cap [-1,1]^n$, $n=2,3.$ The notation $\phi_{i,j}$ is used to denote the value of $\phi$ at the grid node $(ih,jh)$. Similar notations are used for other functions. }
\textcolor{black}{
If $\phi$ is a distance function, we define $\nabla\phi(x,y)\equiv 1.$ This is in fact an advantage of using distance functions to embed closed hypersurfaces.  We will study the effect of the kernels with more general Lipschitz continuous level set functions. 
Since our focus is on the kernels, we will use analytically defined formulas for $\nabla\phi$ in the respective examples.
}

\textcolor{black}{
Let us briefly summarize what to observe in the following examples.
The error computed by the proposed summation consists of two parts: one part is of analytical nature and depends on the number of vanishing moments of the kernel, as well as how
\[
I[f,\phi](\eta)=\int_{\Gamma_\eta} f dS,
\]
the one parameter family of integrals, changes as a function of $\eta$. Here $\Gamma_{\eta}$ is the $\eta$-level set of $\phi$.
If $I[f,\phi](\eta)$ is a quadratic polynomial in $\eta$, then there is no error in the analytical approximation for a kernel with at least two vanishing moments. Otherwise, the analytical error will be proportional to $\epsilon^{m+1}$, where $m$ is the number of vanishing moments of the kernel.
The second part of the 
computed error is the accuracy of the discretization of  the level set surface integral $S$, defined in \eqref{eq:S}. 
In the following examples, we propose the use of simple Riemann sum of the integrand over the grid. Due to the compactness of the kernel, the integrand in \eqref{eq:S}  can be regarded as a periodic function defined on $[-1,1]^n$, and thus the simple Riemann sum
is equivalent to the Trapezoidal rule. Therefore, the discretization errors inherits the convergence property of the Trapezoidal rule 
for periodic functions on uniform grids. The regularity of the kernel greatly influence the discretization error.}
\textcolor{black}{
Finally, we pointed out that in practice, if $\phi$ is not a distance function,  $\nabla\phi$ will need to be approximated on the grid. 
Typically, $\nabla\phi$ is approximated by simple central differencing yielding second order in $h$ approximations or fifth order WENO approximations. Naturally the accuracy of the overall approximation of the surface integral will be affected.
\emph{In other words, we show that by a smart choice of kernel, we can eliminate many components of the errors associated to a typical approximation of  \eqref{eq:surface_integral} by discretization of \eqref{eq:S} on uniform grids.}
}

\textcolor{black}{
Also in the examples below, we study the relative errors as functions of $h$,  
and present the observed rate at which the computed errors tend to $0$ as $h$ decreases.
We shall scale $\epsilon$ according to different powers of $h$ and observe that
the computed error does decrease at the rate as predicted by the theory, when discretization errors are negligible (due to $C^\infty_c$ kernels).
We point out that when $\epsilon$ is too small compared to $h$, the analytical theory that we develop in this paper no longer holds, as the problem becomes purely discrete. Such a scenario is analyzed in \cite{engquist_tornberg_tsai05}.
}

\begin{example}
In this example, we compute \textcolor{black} {integrals on the circle} $x^{2}+y^{2}=r_{0}^{2}$, for $r_0=0.501$.
\textcolor{black}{We first study the accuracy of the extrapolative approach in computing the length
of the circle, without using the distance function. The approximations are computed by the formula 
 \[
S_{N}:=\sum_{i,j}\epsilon^{-1}\delta_\epsilon(\epsilon^{-1}\phi_{i,j})|\nabla\phi_{_{i,j}}|\,h^{2},
\]
where  $\phi(x,y)=x^{2}+y^{2}-r_{0}^{2}$ and $\nabla\phi_{i,j}$ is evaluated with the exact formula of
$\nabla\phi(ih,jh)$.}
 In Table~\ref{tab:circle-nondistance-fn}, we present our computations
with radius $r_{0}=0.501$, and $\epsilon=2h^{1/2}$ with $h~=~2/N,$
where $N$ is the number of grid points along one coordinate direction.
\textcolor{black}{The analytical approximation error  is $\mathcal{O}(\epsilon^2)$ for $\delta_\epsilon=\delta_{\infty,1}$, and $\mathcal{O}(\epsilon^3)$  for 
$\delta_{\infty,2}$, due to the number of vanishing moments that each kernel has.
Provided that the corresponding quadrature error is negligible, with the scaling $\epsilon=2h^{1/2}$, one should observe that, as $h\rightarrow 0$, the convergence rates for using $\delta_{\infty,1}$
is $1$, and  $1.5$ for using  $\delta_{\infty,2}$. }

We point out that with the same $\epsilon$ at $N=100,$ if we used
the signed distance function, $\phi(x,y)=\sqrt{x^{2}+y^{2}}-r_{0}$
to the same circle, the relative error using $\delta_\epsilon=\delta_{\infty,1}$
would be $2.31890e-08$. This reflects the property of the integral
$I[\tilde{f}\equiv1,\phi](\eta)$, defined in (\ref{eq:familyfunc}),
as a function of $\eta.$ 

\textcolor{black}{Next, we study the accuracy of the extrapolative approach in integrating a Lipschitz continuous function defined on
the same circle. In the computation, the level set function is the signed distance function $d(x,y) = \sqrt{x^2+y^2}-r_0$, and the integrand $f$ is defined by
\[
f(x,y)= \min(|\theta -0.3|, |\theta \textcolor{black}{-} 2\pi-0.3|),~~~0\le \theta=\arg(x,y)<2\pi. 
\]
$f$ is not differentiable along $\theta=0.3$ and \textcolor{black}{$\theta = \pi+0.3$.} 
In Figure~\ref{fig:lip-f} we present relative errors computed by the sum:
\[
S_{N}[f]:=\epsilon^{-1}\sum_{i,j} f(ih,jh) \delta(\epsilon^{-1}\phi_{i,j})\,h^{2},
\]
with $\epsilon=2\sqrt{h}=2/\sqrt{N}$, and $\delta_\epsilon=\delta_{\infty,2}.$
We observe the fast exponential convergence of the relative errors. On the one hand, we  acknowledge that
\[
I[f,\phi](\eta)=\int_{\Gamma_\eta} f dS,
\]
is  a degree one polynomial in $\eta$, even though $f$ is only Lipschitz continuous. The chosen kernel $\delta_{\infty,2}$ has enough resolving power, and the analytical error of the approximation is zero.
It is surprising, however, that the discretization errors converge so fast, even when the integrand is formally only Lipschitz continuous.
}

\begin{table}[h]
\begin{centering}
\caption{Relative error in computing the circumference of a circle using a
non-distance level set function. \label{tab:circle-nondistance-fn}}
\par\end{centering}
\medskip{}

\centering{}%
\begin{tabular}{|c|c|c|c|c|c|c|}
\hline 
 & N=100 & 200 & 400 & 800 & 1600 & 3200\tabularnewline
\hline 
\hline 
Rel. err. $\delta_{\infty,1}$ & 2.19034e-02 & 1.22417e-02 & 6.72509e-03 & 3.61084e-03 & 1.90462e-03 & 9.90744e-04\tabularnewline
\hline 
Order &  & 0.8 & 0.9 & 0.9 & 0.9 & 0.9\tabularnewline
\hline 
\hline 
Rel. err. $\delta_{\infty,2}$ & 2.99384e-03 & 1.53839e-03 & 6.34199e-04 & 2.55519e-04 & 9.96251e-05 & 3.78689e-05\tabularnewline
\hline 
Order &  & 1.0 & 1.3 & 1.3 & 1.4 & 1.4\tabularnewline
\hline 
\end{tabular}
\end{table}
\end{example}
\medskip{}

\begin{figure}[h]
\begin{centering}
\includegraphics[scale = 1.0]{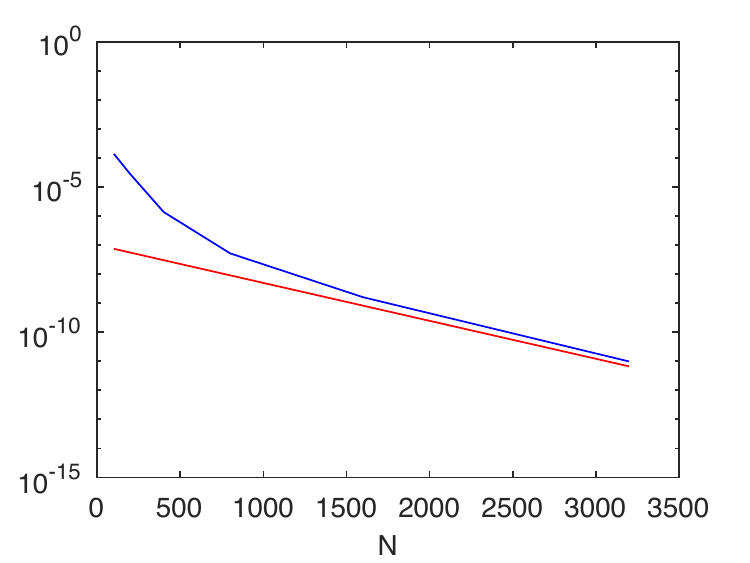}
\par\end{centering}
\caption{The blue curve reveals  the relative errors computed by $S_N[f]$ for integrating a Lipschitz function on a circle. The red curve shows the graph of $ 0.997^N10^{-7}$. \label{fig:lip-f}}
\end{figure}

\begin{example} \label{ex_cusp}
In this example, we compute the length of the black curve shown in
Figure~\ref{fig:Cusp}, which has four cusps. 
\begin{figure}[h]
\begin{centering}
\includegraphics[scale = 1.0]{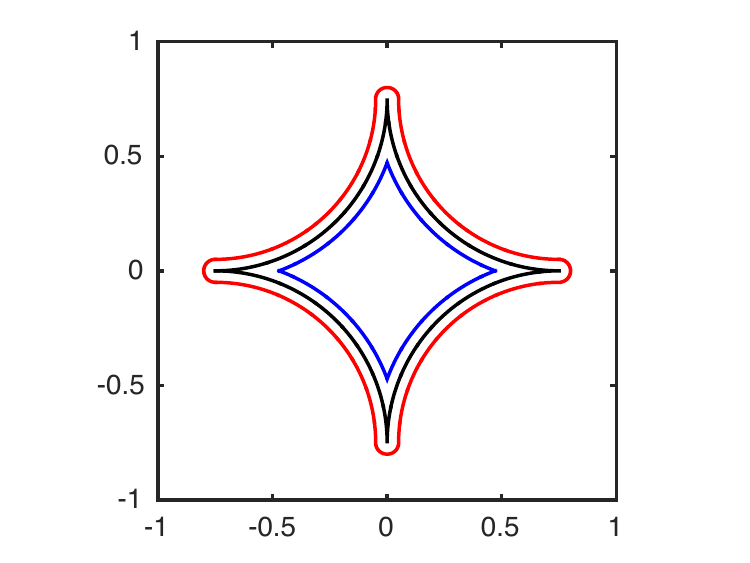}
\par\end{centering}
\caption{Cusp\label{fig:Cusp}}
\end{figure}
The curve $\Gamma_{0}$
is defined by the four quarter of circles with radius $r_{0}=0.75,$
centered respectively as $(r_{0},0),(-r_{0},0),(0,r_{0})$ and $(0,-r_{0}).$ 

The length is computed by the formula 
\[
S_{N}^{+}:=\sum_{i,j}\epsilon^{-1}\delta_{\infty,1}(\epsilon^{-1}d_{i,j})\,h^{2}
\]
using $\epsilon=0.05$. The relative errors are tabulated in Table~\ref{tab:cusp-errors}. The convergence in this example is actually exponential, namely the error decays like $\alpha^N$, with $0 < \alpha< 1$. In this example, $\alpha \approx 0.9954$. Figure~\ref{fig:cusp_convergence} shows the exponential convergence rate.
Here $d_{i,j}$ denotes the value of the signed distance function
to $\Gamma_{0}$ at the point $(x_{i},y_{j}).$ In the computations, the
sign of the signed distance function to $\Gamma_{0}$ is designated to
be negative inside the enclosed region. Note that the level sets of
$d$ in the region $\{d>0\}$ are continuously differentiable. As
the analysis in the above section shows, computations performed in
$\{d<0\}$ using
\[
S_{N}^{-}:=\sum_{i,j}\epsilon^{-1}\delta_{\infty,1}(-\epsilon^{-1}d_{i,j})\,h^{2},
\]
will not yield accurate approximations due to the singularity of $I[\tilde{f}\equiv1,\phi](\eta)$
near $\eta=0^{-}.$

\begin{table}[h]
\begin{centering}
\caption{Relative errors in computing the length of the black curve, containing
four cusps, in Figure \ref{fig:Cusp}. \label{tab:cusp-errors}}
\par\end{centering}
\medskip{}

\centering{}%
\begin{tabular}{|c|c|c|c|c|c|c|}
\hline 
$w_{\infty,1}$ & N=100 & 200 & 400 & 800 & 1600 & 3200\tabularnewline
\hline 
\hline 
Rel. error $S_{N}$ & 7.04018e-3 & 6.63514e-4 & 4.43853e-5 & 4.45564e-7 & 5.84085e-9 & 3.74043e-12\tabularnewline
\hline 
Order &  & 3.4 & 3.9 & 6.6 & 6.3 & 10.6\tabularnewline
\hline 
\end{tabular}
\end{table}

\begin{figure}[h]
\begin{centering}
\includegraphics[scale = 1]{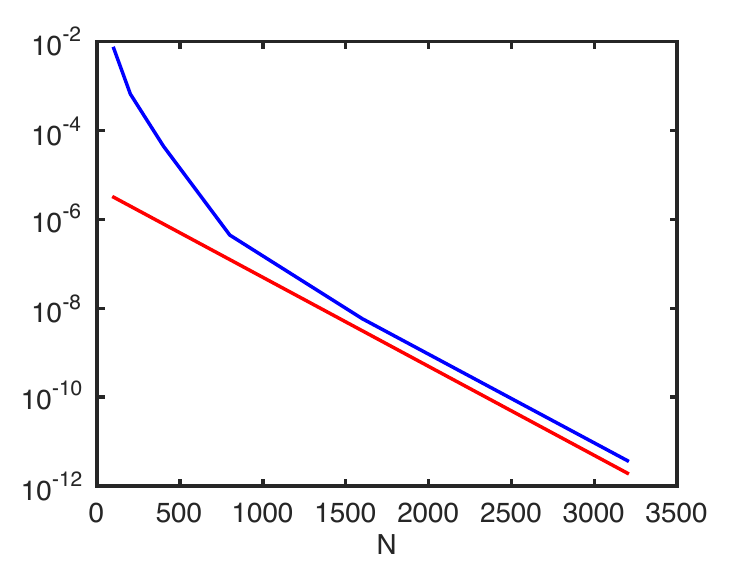}
\par \end{centering}
\caption{The blue curve reveals the relative errors computed by $S_N[f]$ for computing the length of the black curve with four cusps in Example~\ref{ex_cusp}. The red curve shows the graph of $ 5(0.9954)^N10^{-6}$. \label{fig:cusp_convergence}}
\end{figure}

In Table~\ref{tab:cusp-shifted-errors}, we present the numerical errors
computed by $\tilde{S}_{N}^{-}$ to approximate the length of the
interface which is $0.05$ distance away from the black curve shown
in Figure~\ref{fig:Cusp}
\[
\tilde{S}_{N}^{-}:=\sum_{i,j,k}\epsilon^{-1}\delta_{\infty,1}(-\epsilon^{-1}(d_{i,j}+0.05))\,h^{2}.
\]
 
\begin{table}[h]
\begin{centering}
\caption{Relative errors in computing the length of the blue curve, containing
four corners, in Figure \ref{fig:Cusp}. The theoretical convergence
rate for this simulation is $2.0.$ \label{tab:cusp-shifted-errors}}
\par\end{centering}
\medskip{}

\centering{}%
{\small \begin{tabular}{|c|c|c|c|c|c|c|c|}
\hline 
$w_{\infty,2}$ & $\epsilon$ & N=100 & 200 & 400 & 800 & 1600 & 3200\tabularnewline
\hline 
\hline 
Rel. error $S_{N}$ & $3.4N^{-2/3}$ & 1.64925e-02 & 8.63529e-03 & 2.98334e-03 & 1.08381e-03 & 3.34617e-04 & 9.79520e-05\tabularnewline
\hline 
Order &  &  & 0.9 & 1.5 & 1.5 & 1.7 & 1.8\tabularnewline
\hline 
\end{tabular}}
\end{table}
\medskip{}

\end{example}

\begin{example} \label{ex_ell1}
We compute the surface area of $\phi(x,y,z):=|x|+|y|+|z|=r_{0}$ (graphed in Figure~\ref{fig:ell1_ball})
with
$r_{0}=0.65$ by the following sum
\[
S_{N}:=\sum_{i,j,k}\epsilon^{-1}\delta_{\infty,2}(\epsilon^{-1}\phi_{i,j,k})|\nabla\phi_{i,j,k}|h^{3},
\]
where $|\nabla\phi_{i,j,k}|\equiv\sqrt{3}.$ 

\begin{figure}[h]
\begin{centering}
\includegraphics[scale = 1]{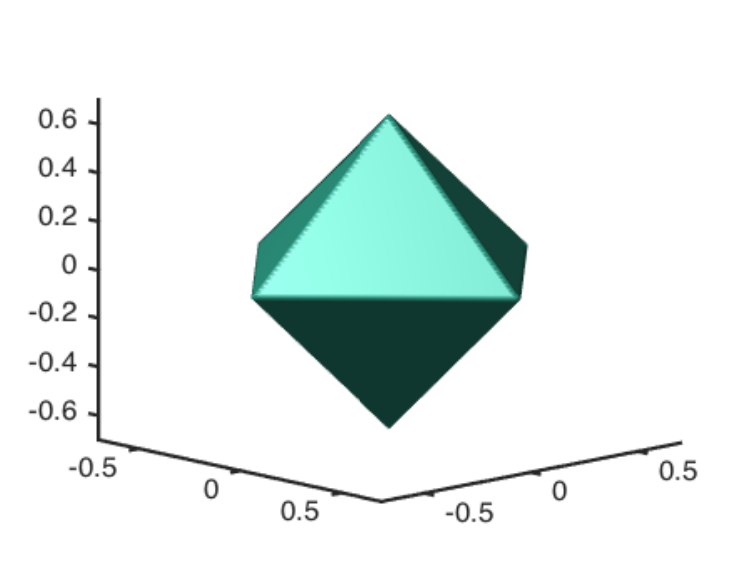}
\par \end{centering}
\caption{Surface area of $|x|+|y|+|z|=r_{0}$ computed in Example~\ref{ex_ell1}. \label{fig:ell1_ball}}
\end{figure}

The relative errors with
$\epsilon=0.1$ and a few values of $h=1/N$ are presented in Table~\ref{tab:L1-ball}. The convergence rate in this simulation is also exponential and is illustrated in Figure~\ref{fig:cusp_convergence}.
The point of this example is to demonstrate that the proposed approach
is able to compute high order approximations of surface integrals,
in the case where the surface and the embedding level set function
are only piecewise smooth. This capability is not seen in other existing
level set methods.

\begin{table}[h]
\begin{centering}
\caption{Relative error in computing the surface area of an $\ell_{1}$-ball.
\label{tab:L1-ball}}
\par\end{centering}
\medskip{}

\centering{}
\begin{tabular}{|c|c|c|c|c|}
\hline 
 & N=100 & 200 & 400 & 800\tabularnewline
\hline 
\hline 
Rel. error & 5.87232e-1 & 2.63126e-2 & 8.19894e-4 & 5.23091e-6\tabularnewline
\hline 
Order &  & 4.5 & 5.0 & 7.3\tabularnewline
\hline 
\end{tabular}
%\par\end{centering}
\end{table}

\medskip{}

\begin{figure}[h]
\begin{centering}
\includegraphics[scale = 1]{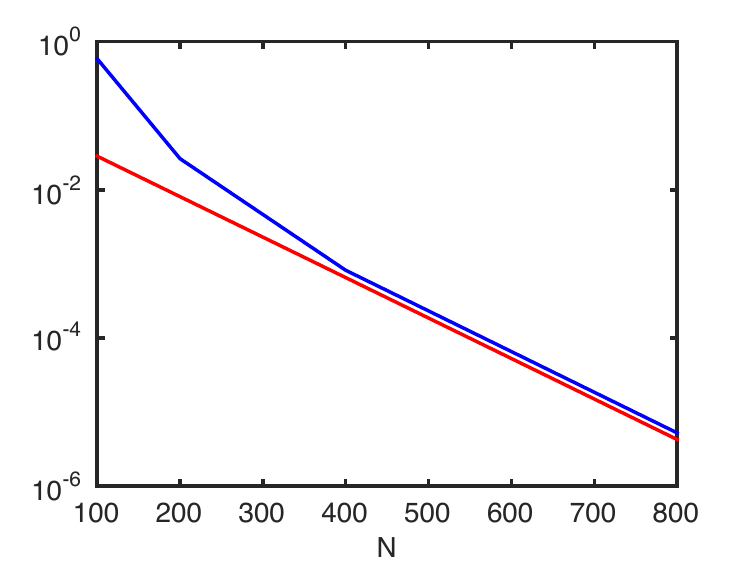}
\par \end{centering}
\caption{The blue curve reveals the relative errors computed by $S_N[f]$ for computing the surface area of the $\ell^1$ ball described in Example~\ref{ex_ell1}. The red curve shows the graph of $0.9875^N10^{-1}$. \label{fig:ell1_ball}}
\end{figure}

\end{example}

\begin{example}
In this example, we compute the line integral of a function that has
an integrable singularity at a corner of the interface. \textcolor{black}{Let
$\phi^{(1)}(x,y):=|x|+|y|-1,$ $\phi^{(2)}(x,y)=\text{sgn}(\phi^{(1)}(x,y))\left(\phi^{(1)}(x,y)\right)^{2}$,
where $\text{sgn}(z)$ is the signum function. Thus $\Gamma_{0}$ is the
$\ell_{1}$-ball. Analytically, the gradients of $\phi^{(1)}$ and $\phi^{(2)}$ exist almost everywhere. 
So in our computation, we globally define the gradients to be
$|\nabla\phi_{i,j}^{(1)}|=\sqrt{2}$
and $|\nabla\phi_{i,j}^{(2)}|=\sqrt{8|\phi_{i,j}^{(2)}|}.$ }
 Let $f(r)=1/\sqrt{r}$, for $r\neq 0$ and $f(0)=10^9$.  We define the interface
$\Gamma_{0}:=\{\phi=0\}$ and we approximate the integral
\[
\int_{\Gamma_{0}}f(\sqrt{x^{2}+(y-1)^{2}})dS(x,y)
\]
 by 
\[
S_{N}^{(\ell)}:=\sum_{i,j}f(\sqrt{x_{i}^{2}+(y_{j}-1)^{2}})\epsilon^{-1}\delta_{\epsilon}(\epsilon^{-1}\phi_{i,j}^{(\ell)})|\nabla\phi_{i,j}^{(\ell)}|h^{2},
\]
 where \textcolor{black}{$\delta_{\epsilon}=\delta_{0.1,\infty,1}$ . 
 %Since $\delta_{0.1,\infty,1}(\epsilon^{-1}\phi_{i,j}^{(\ell)})$ is only supported
%in $(0.1\epsilon,\epsilon),$ we only define  \textcolor{black}{have?} 
In Table~\ref{tab:singularity-corner-example}, we present numerical
errors of our computations, where $\epsilon$ is adjusted for each
kernel so that the same number of points are used in the narrow bands
$\{0.1\epsilon<|\phi_{i,j}^{(1)}|<\epsilon\}$ and $\{0.1\epsilon<|\phi_{i,j}^{(2)}|<\epsilon\}$. }

\textcolor{black}{This is an example that suggests the potential of
the proposed extrapolative approach in computing integrands involving
integrable singularities. For integration of singularities such as
$1/\sqrt{x}$ in the interval $[0,1]$, one typically needs to require
that the step size $h=h(x)$ decreases sufficiently fast as $x$
tends to $0,$ otherwise, the resulting quadrature will have a significant
drop in the order of accuracy. However, as Table~\ref{tab:singularity-corner-example}
shows, the relative errors computed with $\phi^{(1)}$ decrease very
slowly, but they are all under 1\%; the relative errors computed with
$\phi^{(2)}$ decrease steadily as the mesh refines. }

\textcolor{black}{There are three factors that determines the performance
of the algorithm. The first one is the use of an extrapolative
kernel which tends to zero as the value of the level set function
goes to zero. Therefore, the singularity of the integrand is suppressed. Second, we observe that near $(0,1)$,
the singularity of $f$, $|\nabla\phi^{(2)}(x,y)|$ is small, and
the product does not have a singularity. See Figure~\ref{fig:nonlinear-sampling}
for the graph of $f\,|\nabla\phi^{(2)}|$. However, the derivative
of $f\,|\nabla\phi^{(2)}|$ is unbounded at $(0,1)$, therefore, we expect that typical methods based on uniform grids will have deterioration in the accuracy and the convergence rate. However, the results reported in Table~\ref{tab: singular_int} are surprisingly better. Finally, the level set function
$\phi^{(2)}$ is proportional to the squared distance to the interface
$\Gamma$ (the zero level set of $\phi^{(2)}$), and when discretized
with a uniform grid, one has effectively an adaptive meshing with
quadratic refinement in the step size in the direction normal to the
interface. See Figure~\ref{fig:nonlinear-sampling} for an illustration
comparing the discretization resulting from $\phi^{(1)}(0,jh)$ and
$\phi^{(2)}(0,jh)$.} % In the computations presented in Table~\ref{tab:singularity-corner-example},
%the number of grid nodes, for $N=200,400,800,1600,3200$, falling
%into the set $\{f>4\text{ and }0<\phi^{(\ell)}<\epsilon/4\}$ are:
%55, 170, 492, 1341, 3841 for $\ell=1$; and 86, 358, 1068, 2993, 8375
%for $\ell=2.$

\textcolor{black}{Rigorous analysis of our approach to this type of
singular integrals is the subject of another paper.}

\begin{figure}
\begin{centering}
\includegraphics{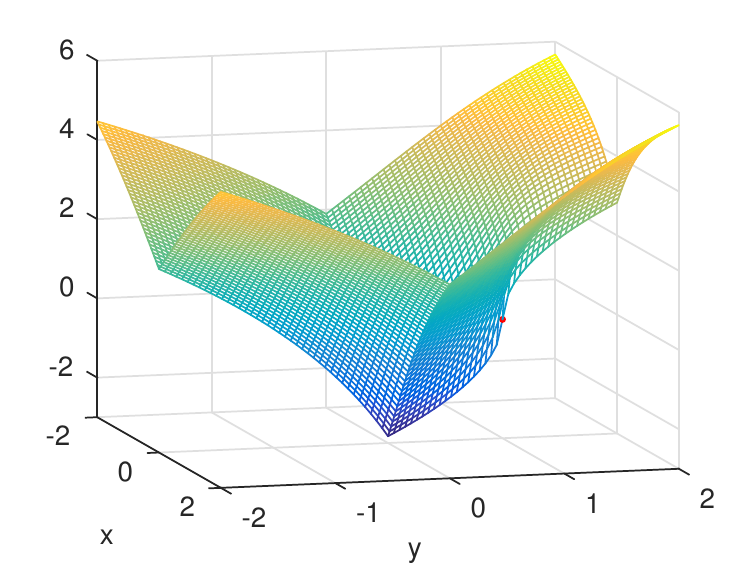}\includegraphics{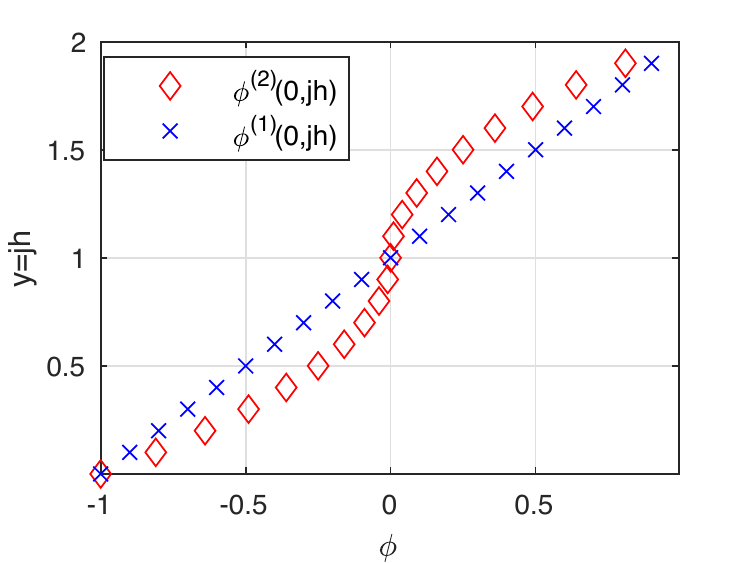}
\par\end{centering}
\caption{\label{fig:nonlinear-sampling} Left: The graph of $f(x,y)|\nabla \phi^{(2)}(x,y)| := f(x,y) \sqrt{8\phi^(2)(x,y)}$. The red dot indicates where the integrand is singular.  Right: $\{\phi^{(1)}(0,jh):j=0,\pm1,\pm2,\cdots\}$
and $\{\phi^{(2)}(0,jh):j=0,\pm1,\pm2,\cdots\}$ have different spacings.
\textcolor{black}{The density of the latter increases as the absolute value
of $|\phi^{(2)}(0,jh)|$ is closer to $0$. }}

\end{figure}

\begin{table}
\begin{centering}
\caption{Relative errors in the computation of a singular integral. \textcolor{black}{The
observed slow convergence resulting from using $\phi^{(1)}$ is expected,
as the grid cannot resolve the singularity in $f\,|\nabla\phi^{(1)}|$.
On the contrary, the quadrature with $\phi^{(2)}$ does better, at
least rate-wise, and the corresponding computation produces decreasing
errors as $N$ increases. }\label{tab:singularity-corner-example}}
\label{tab: singular_int}

\par\end{centering}
\medskip{}

\centering{}%
\begin{tabular}{|c|c|c|c|c|c|c|}
\hline 
Ker=$w_{\delta,\infty,1}$ & $\epsilon$ & N=200 & 400 & 800 & 1600 & 3200\tabularnewline
\hline 
\hline 
Rel. err. $\phi^{(1)}$ & $a_{0}N^{-0.475}$ & 1.01552e-02 & 8.84065e-03 & 7.63649e-03 & 6.55206e-03 & 5.59749e-03\tabularnewline
\hline 
Order &  &  & 0.2 & 0.2 & 0.2 & 0.2\tabularnewline
\hline 
\hline 
Rel. err. $\phi^{(2)}$ & $a_{0}^{2}N^{-0.95}$ & 1.77161e-02 & 9.47018e-03 & 4.62084e-03 & 1.51821e-03 & 4.30993e-04\tabularnewline
\hline 
Order &  &  & 0.9 & 1.0 & 1.6 & 1.8\tabularnewline
\hline 
\end{tabular}
\end{table}
\end{example}

\section{Discussion}

In this section, we compare this new approach with the original KTT
approach constructed in~\cite{kublik_tanushev_tsai13} and discuss
the potentials of the extrapolative approach for hypersurfaces with
singularities. For smooth hypersurfaces, the original and the extrapolative
approach both yield an exact result, namely the volume integral coincide
exactly with the hypersurface integral one wishes to compute. However,
exactness is not obtained the same way. The original approach needs
a Jacobian term in the integrand: this Jacobian corrects for the change
in curvature incurred when one moves away from the hypersurface (namely
the zero level set of the level set function). The extrapolative approach
does not have a Jacobian but instead requires the approximation of
the Dirac delta function to have at least $n-1$ vanishing moments,
where $n$ is the dimension. While the choice of the method is ultimately
up to the practitioner, we believe it is easier to use the original
approach on smooth hypersurfaces since (a) the Jacobian is easy to
compute using the singular values of the Jacobian matrix of the closest
point mapping (see \cite{kublik_tsai16}), and (b) there is no need
to construct kernels with large numbers of vanishing moments, other
than accuracy gain.

On the other hand, while the original approach suffers from low accuracy
when used on hypersurfaces with singularities, the extrapolative approach
is better suited for such cases. That said, we point out that for
hypersurfaces with singularities, neither one of these two approaches
will yield an integral formulation that coincides exactly with the
hypersurface integral. However, the extrapolative approach is able
to achieve good accuracy on hypersurfaces with corners, while the
original approach does not perform well on any hypersurfaces with
singularities. Unlike the original approach, the extrapolative one
looks at a family of functionals in $\eta$, where $\eta$ is the
distance from a shifted level set and the hypersurface. In that case,
if the singularity is a corner, the family of functionals will be
a polynomial in $\eta$ and thus, the accuracy will depend on the number
of vanishing moments of the kernel. This approach is therefore capable
of achieving high accuracy for computations of integrals over piecewise
smooth hypersurfaces. For cusps, the family of functionals is not
polynomial in $\eta$ but a series with fractional powers in $\eta.$ In that case, our analysis
suggests that it is necessary to construct a different
class of kernels with ``fractional vanishing moments'' in order to achieve
high accuracy. % with the extrapolative integral formulation.
Nevertheless, this extrapolative approach provides a stepping
stone towards computing over hypersurfaces with singularities. In
addition, we have shown a numerical simulation that suggests the potential of this technique for integrating functions with integrable singularities. 
\section{Conclusion}

We described an extrapolative approach for integrating over hypersurfaces in
the level set framework. This method is based on the classical integral formulation
using an approximation of the Dirac delta function typically used with
level sets. This analytical integral formulation is for most cases
an approximation of the integral one wishes to compute. We show that
for smooth interfaces, if the kernel approximating the Dirac delta
function has enough vanishing moments, the integral formulation
is actually equal to the hypersurface integral. In addition, unlike
previous numerical integration schemes for level sets, we demonstrate
that this method is capable of computing a line or surface integral
with very high accuracy in the case where the hypersurface is only
piecewise smooth (e.g. with corners). Finally, with an appropriate choice
of kernel approximating the Dirac delta function, we can also compute
integrals where the integrand has an integrable singularity. In particular, this
work lays the foundation of a numerical scheme for computing general
improper integrals. %It is also of interest to understand how this
%approach can be extended to interfaces of higher codimension.

\section*{Acknowledgement}

Kublik is supported by a University of Dayton Research Council Seed
Grant. Tsai is supported partially by a National Science Foundation
Grant DMS-1318975 and an ARO Grant No. W911NF-12-1-0519. Tsai also thanks National Center for Theoretical
Sciences, Taiwan, for hosting his stay at the center where part of
the research for this paper was conducted. 

\bibliographystyle{plain}
\bibliography{references}

\end{document}